\newtheorem{Theorem}{Theorem}
\newtheorem*{theorem*}{Theorem}
\newtheorem{corollary}{Corollary}
\newtheorem{lem}{Lemma}
\newcommand{\1}{\mathbbm{1}}
\begin{document}

\title[Average Goldbach Representation Formula of Fujii]{On an Average Goldbach Representation Formula of Fujii}

\author[Goldston]{D. A. Goldston}
 \address{Department of Mathematics and Statistics, San Jose State University}
 \email{daniel.goldston@sjsu.edu}

\author[Suriajaya]{Ade Irma Suriajaya$^{*}$}
\thanks{$^{*}$ The second author was supported by JSPS KAKENHI Grant Numbers 18K13400 and 22K13895, and also by MEXT Initiative for Realizing Diversity in the Research Environment.}
 \address{Faculty of Mathematics, Kyushu University}
 \email{adeirmasuriajaya@math.kyushu-u.ac.jp}

\subjclass[2010]{11M26, 11N05, 11N37, 11P32}

\maketitle

\begin{abstract}
Fujii obtained a formula for the average number of Goldbach representations with lower order terms expressed as a sum over the zeros of the Riemann zeta-function and a smaller error term. This assumed the Riemann Hypothesis. We obtain an unconditional version of this result, and obtain applications conditional on various conjectures on zeros of the Riemann zeta-function.

\noindent
{\it Key words and phrases}: prime numbers, Goldbach Conjecture, Goldbach representations, prime number theorem, Riemann zeta-function, zeros, pair correlation
\end{abstract}

\bigskip

\noindent
{\bf Added in Proof.} Languasco, Perelli, and Zaccagnini \cite{LPZ12,LPZ16,LPZ17} have obtained many results connecting conjectures related to
pair correlation of zeros of the Riemann zeta function to conjectures on primes. It has been
brought to our attention that the main result in our follow-up paper
\cite{GS22} to this paper on the error in the prime number theorem has
already been obtained in \cite{LPZ16}. Our method is based on a generalization $F_\beta(x,T)$ of $F(x,T)$ from 
\eqref{Fdef} where $w(u)$ is replaced with $w_\beta(u) = \frac{4\beta^2}{4\beta^2+u^2}$. In \cite{LPZ16} they used $F_\beta(x,T)$ with a change of variable $\beta = 1/\tau$. The results we obtained are analogous to some of their results. In the paper \cite{LPZ17} it is shown how this method can be applied to generalizations of $\mathcal{J}(x,\delta)$ in \eqref{calJ}.
We direct interested readers to these papers of Languasco,
Perelli, and Zaccagnini.

\bigskip

\section{Introduction and Statement of Results}

Let
\begin{equation}\label{psi_2} \psi_2(n) = \sum_{m+m'=n} \Lambda(m)\Lambda(m'),\end{equation}
where $\Lambda$ is the von Mangoldt function, defined by $\Lambda(n)=\log p$ if $n=p^m$, $p$ a prime and $m\ge 1$, and $\Lambda(n)= 0$ otherwise. Thus $\psi_2(n)$ counts \lq \lq Goldbach" representations of $n$ as sums of both primes and prime powers, and these primes are weighted to make them have a \lq\lq density" of 1 on the integers. 
Fujii \cite{Fujii1,Fujii2,Fujii3} in 1991 proved the following theorem concerning the average number of Goldbach representations. 
\begin{theorem*}[Fujii] Assuming the Riemann Hypothesis, we have
\begin{equation} \label{FujiiThm} \sum_{n\le N} \psi_2(n) = \frac{N^2}{2} - 2\sum_{\rho} \frac{N^{\rho+1}}{\rho(\rho +1)} + O(N^{4/3}(\log N)^{4/3}), \end{equation}
where the sum is over the complex zeros $\rho=\beta +i\gamma$ of the Riemann zeta-function $\zeta(s)$, and the Riemann Hypothesis is $\beta = 1/2$. 
\end{theorem*}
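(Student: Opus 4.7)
The plan is to work directly from the truncated explicit formula for $\psi(x)=\sum_{n\le x}\Lambda(n)$ under RH and track the pieces carefully. Starting from
$R(N):=\sum_{n\le N}\psi_2(n)=\sum_{a\le N}\Lambda(a)\psi(N-a)=\int_0^N\psi(N-u)\,d\psi(u)$,
I will insert the truncated explicit formula $\psi(x)=x-\sum_{|\gamma|\le T}x^\rho/\rho+O\!\bigl(x\log^2(xT)/T\bigr)$, valid for $x\ge 2$ under RH. Writing $E(x):=\psi(x)-x$ and doing a single integration by parts (the boundary terms vanish because $\psi(0)=0$ and $(N-u)^\rho\to 0$ as $u\to N$), this yields
\[
R(N)=\tfrac12 N^2+\int_0^N E(u)\,du-\sum_{|\gamma|\le T}\frac{N^{\rho+1}}{\rho(\rho+1)}-J(N,T)+O\!\left(\frac{N^2\log^2(NT)}{T}\right),
\]
where $J(N,T):=\sum_{|\gamma|\le T}\int_0^N(N-u)^{\rho-1}E(u)\,du$ and the computation $\int_0^N u(N-u)^{\rho-1}\,du=N^{\rho+1}/(\rho(\rho+1))$ has been used.

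Next I will produce the second copy of the sum over zeros by integrating the explicit formula for $E(u)$ term-by-term. Since $|N^{\rho+1}/(\rho(\rho+1))|\ll N^{3/2}/\gamma^2$ under RH the series is absolutely convergent, so $\int_0^N E(u)\,du=-\sum_\rho N^{\rho+1}/(\rho(\rho+1))+O(N)$. Completing the truncated sum $\sum_{|\gamma|\le T}$ in the display above to the full sum introduces only a tail of order $N^{3/2}\log T/T$. Combining these pieces produces the asserted main term $-2\sum_\rho N^{\rho+1}/(\rho(\rho+1))$.

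All the remaining work is concentrated in bounding $J(N,T)$. I will write $J(N,T)=\int_0^N E(u)\,K_T(N-u)\,du$ with $K_T(y):=\sum_{|\gamma|\le T}y^{\rho-1}$, and apply Cauchy--Schwarz with weight $(N-u)^{1/2}$:
\[
|J(N,T)|\le\Bigl(\int_0^N\frac{|E(u)|^2}{(N-u)^{1/2}}\,du\Bigr)^{1/2}\Bigl(\int_0^N(N-u)^{1/2}|K_T(N-u)|^2\,du\Bigr)^{1/2}.
\]
The first integral is $\ll N^{3/2}\log^4 N$ using the pointwise RH bound $|E(u)|\ll u^{1/2}\log^2 u$. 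For the second, substituting $y=N-u$ and evaluating $\int_0^N y^{-1/2+i(\gamma-\gamma')}\,dy$ reduces it to $N^{1/2}\sum_{\gamma,\gamma'}N^{i(\gamma-\gamma')}/(1/2+i(\gamma-\gamma'))$, whose diagonal contributes $O(T\log T)$ and whose off-diagonal contributes $O(T\log^3 T)$ via the standard density bound $\#\{\gamma:|\gamma-\gamma_0|\le 1\}\ll\log|\gamma_0|$. Together the two factors give $|J(N,T)|\ll NT^{1/2}\log^{7/2}N$; balancing this against the truncation error $N^2\log^2(NT)/T$ at $T\asymp N^{2/3}$ delivers a final error of size $O(N^{4/3}\log^c N)$ for an explicit $c$.

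The main obstacle is exactly the bound on $J(N,T)$: a term-by-term triangle inequality yields only $O(N^{3/2})$, which is worse than both of the main zero-sums, so one is forced to exploit cancellation among the zeros through a mean-value estimate for $K_T$ and an optimised choice of $T$. Pinning down the sharp log-exponent $4/3$ promised in Fujii's theorem would require sharper mean-value estimates (in the spirit of Montgomery's pair-correlation machinery) than the crude zero-density input used in the sketch above.
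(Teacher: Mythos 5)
Your argument is essentially Fujii's original route, which is genuinely different from what the paper does. The paper never touches the convolution $\sum_{a\le N}\Lambda(a)\psi(N-a)$ directly: it first proves the exact identity of \Cref{thm1}, in which everything beyond the explicit main terms is the single integral $E(N)=\int_0^1(\Psi(r,\alpha)-I(r,\alpha))^2I_N(1/r,-\alpha)\,d\alpha$, and then bounds $|E(N)|\le\mathcal{E}(N)$ via Gallagher's lemma in terms of the variances $H(x)$ and $J(x,h)$ of \eqref{HandJ}; under RH, Cram\'er's bound \eqref{Cramer} and the Saffari--Vaughan bound \eqref{SV} give $\mathcal{E}(N)\ll N\log^3\!N$ (\Cref{thm4}), which implies \eqref{FujiiThm} with room to spare. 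You instead insert the truncated explicit formula into $\int_0^N\psi(N-u)\,d\psi(u)$, correctly produce the two copies of the zero sum (one from $\int_0^N(N-u)\,d\psi(u)=\psi_1(N)$ via \eqref{Psi1}, one from integrating $\sum_{|\gamma|\le T}(N-u)^{\rho}/\rho$ against $du$), and isolate the cross term $J(N,T)=\int_0^NE(u)K_T(N-u)\,du$ with $K_T(y)=\sum_{|\gamma|\le T}y^{\rho-1}$. Your Cauchy--Schwarz treatment of $J(N,T)$ checks out: the diagonal/off-diagonal split using \eqref{N(T+1)-N(T)} gives $\int_0^Ny^{1/2}|K_T(y)|^2\,dy\ll N^{1/2}T\log^3T$, hence $J(N,T)\ll NT^{1/2}\log^{7/2}\!N$. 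What each approach buys: yours is self-contained and stays close to Fujii, but is intrinsically stuck near $N^{4/3}$ because of the $N^2\log^2(NT)/T$ truncation loss; the paper's detour through $J(x,h)$ exploits the Saffari--Vaughan second-moment saving for $h$ close to $x$ and lands at $N\log^3\!N$, essentially optimal by \eqref{BPomega}.

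There is, however, a real shortfall against the literal statement: balancing $NT^{1/2}\log^{7/2}\!N$ against $N^2\log^2N/T$ at $T\asymp N^{2/3}\log^{-1}\!N$ yields an error $O(N^{4/3}\log^3\!N)$, not $O(N^{4/3}(\log N)^{4/3})$, so as written you prove a logarithmically weaker bound than \eqref{FujiiThm}; you acknowledge this, but it means the stated theorem is not actually established by your sketch. Two smaller points to patch: the truncated explicit formula carries the term $\log x\min(1,x/(T\lVert x\rVert))$, and since $N-a$ is an integer at every prime power $a$ carrying the mass of $d\psi$, that $\min$ equals $1$ there; its total contribution is $\ll N\log N$ and harmless, but it must be accounted for, as must the range $u\in(N-2,N]$ where the explicit formula is not valid (trivial, since $\psi(N-u)=0$ there). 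Finally, your closing remark that the triangle inequality gives only $O(N^{3/2})$ applies to the pre-integration-by-parts sum $\sum_a\Lambda(a)\sum_{|\gamma|\le T}(N-a)^\rho/\rho$; applied to $J(N,T)$ itself it gives the much worse $NT\log^3\!N$, so the mean-value step really is carrying the argument.
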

Thus the average number of Goldbach representations is connected to the zeros of the Riemann zeta-function, and as we will see later, is also closely connected to the error in the prime number theorem. The sum over zeros appears in the asymptotic formula even without the assumption of the Riemann Hypothesis, but the Riemann Hypothesis is needed to estimate the error term.
With regard to the sum over zeros, it is useful to keep in mind the Riemann von Mangoldt formula
\begin{equation} \label{N(T)} N(T) := \sum_{0 < \gamma \le T} 1= \frac{T}{2\pi} \log \frac{T}{2 \pi} - \frac{T}{2\pi} + O(\log T), \end{equation}
see \cite[Theorem 25]{Ingham1932} or \cite[Theorem 9.4]{Titchmarsh}.
Thus $N(T) \sim \frac{T}{2\pi}\log T$, and we also obtain
\begin{equation} \label{N(T+1)-N(T)} N(T+1) - N(T) = \sum_{T<\gamma \le T+1} 1 \ll \log T. \end{equation}
This estimate is very useful, and it shows that the sum over zeros in \eqref{FujiiThm} is absolutely convergent.
Hence, the Riemann Hypothesis implies that
$$ \sum_{n\le N} \psi_2(n) = \frac{N^2}{2} + O(N^{3/2}). $$
This was shown by Fujii in \cite{Fujii1}.
Unconditionally, Bhowmik and Ruzsa \cite{BhowRuzsa2018} showed that the estimate
$$ \sum_{n\le N} \psi_2(n) = \frac{N^2}{2} + O(N^{2-\delta}) $$
implies that for all complex zeros $\rho=\beta+i\gamma$ of the Riemann zeta-function, we have $\beta<1-\delta/6$.
We are interested, however, in investigating the error in \eqref{FujiiThm}, that is the error estimate not including the sum over zeros.

It was conjectured by Egami and Matsumoto \cite{EgamiMatsumoto} in 2007 that the error term in \eqref{FujiiThm} above can be improved to $O(N^{1+\varepsilon})$ for any $\varepsilon>0$. That error bound was finally achieved, assuming the Riemann Hypothesis, by Bhowmik and Schlage-Puchta in \cite{BhowPuchta2010} who obtained $O(N\log^5\!N)$, and this was refined by Languasco and Zaccagnini \cite{Lang-Zac1} who obtained the following result. 
\begin{theorem*}[Languasco-Zaccagnini] Assuming the Riemann Hypothesis, we have
\begin{equation} \label{LZ} \sum_{n\le N} \psi_2(n) = \frac{ N^2}{2} -2 \sum_{\rho} \frac{N^{\rho+1} }{\rho(\rho +1)}+O( N\log^3\!N).\end{equation}
\end{theorem*}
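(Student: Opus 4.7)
The plan is to reduce $R(N) := \sum_{n\le N}\psi_2(n)$ to a self-convolution of $\psi$ and apply the explicit formula twice. Swapping the order of summation gives $R(N) = \sum_{m\le N}\Lambda(m)\,\psi(N-m)$; substituting the truncated Riemann--von Mangoldt formula
$$\psi(x) = x - \sum_{|\gamma|\le T}\frac{x^\rho}{\rho} + O\!\left(\frac{x\log^2(xT)}{T}+\log x\right)$$
at height $T$ splits $R(N)$ into a linear part $\sum_{m\le N}\Lambda(m)(N-m)$, a zero-sum part $-\sum_{|\gamma|\le T}\rho^{-1}\sum_{m\le N}\Lambda(m)(N-m)^{\rho}$, and a truncation error. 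Abel summation with $\psi$ as the integrator converts the linear part into $\int_{0}^{N}\psi(u)\,du+O(\log N)$ and the zero-sum part into $-\sum_{|\gamma|\le T}\int_{0}^{N}\psi(u)(N-u)^{\rho-1}\,du$. Substituting the explicit formula a \emph{second} time inside these integrals, together with the elementary evaluations $\int_{0}^{N}u\,du=N^{2}/2$ and $\int_{0}^{N}u(N-u)^{\rho-1}\,du=N^{\rho+1}/(\rho(\rho+1))$, yields the main term $N^{2}/2$ and \emph{two} copies of $-\sum_{\rho}N^{\rho+1}/(\rho(\rho+1))$: one from $\int_{0}^{N}(\psi(u)-u)\,du$, the other as the cross term between the outer $\rho$-sum and the linear part $u$ of $\psi(u)$. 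This explains the coefficient $-2$ in \eqref{LZ}.

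What remains after collecting the errors is a double sum over zeros
\[
D := \sum_{|\gamma|\le T}\sum_{|\gamma'|\le T}\frac{N^{\rho+\rho'}}{\rho'}\cdot\frac{\Gamma(\rho)\Gamma(\rho'+1)}{\Gamma(\rho+\rho'+1)},
\]
which must be shown to be $O(N\log^{3}N)$ for $T\asymp N$. On RH one has $|N^{\rho+\rho'}|=N$, and Stirling's formula for the Beta kernel provides exponential savings $\exp(-\pi\min(|\gamma|,|\gamma'|))$ on pairs of opposite sign, making that contribution $O(N)$. Pairs of like sign retain only the polynomial factor $|\gamma'|/(|\gamma|+|\gamma'|)^{3/2}$; a termwise majorisation against \eqref{N(T+1)-N(T)} yields only $O(N^{3/2}\log^{2}N)$. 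The missing factor $N^{1/2}$ is extracted by interchanging summation and integration to write $D$ as $\int_{0}^{N}\!\bigl(\sum_{|\gamma|\le T}(N-u)^{\rho-1}\bigr)\bigl(u-\psi(u)+O(\log u)\bigr)\,du$, so that the two factors encode the PNT errors at $N-u$ and $u$; the RH bound $\psi(x)-x\ll x^{1/2}\log^{2}x$, applied to both, delivers the necessary saving. Finally the tails $|\gamma|>T$ of the single zero sums contribute $O(N^{3/2}/T)$ via $1/|\rho(\rho+1)|\ll \gamma^{-2}$ combined with \eqref{N(T+1)-N(T)}, and vanish once $T\asymp N$.

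The main obstacle is precisely establishing $D\ll N\log^{3}N$: a term-by-term bound is off by a factor $N^{1/2}$, and the saving must come from a convolutive reformulation that lets the RH bound on the PNT error do its work twice. All remaining pieces — the $x\log^{2}(xT)/T$ truncation errors from each use of the explicit formula, the tails of the zero sums, and the Abel boundary terms — conspire under the choice $T\asymp N$ to match the target $O(N\log^{3}N)$ in \eqref{LZ}, yielding the theorem.
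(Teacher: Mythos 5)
Your opening reductions are sound: $\sum_{n\le N}\psi_2(n)=\sum_{m\le N}\Lambda(m)\psi(N-m)$, the Abel summation producing $\psi_1(N)$ and the Beta integral $\int_0^N u(N-u)^{\rho-1}\,du=N^{\rho+1}/(\rho(\rho+1))$, the accounting for the coefficient $-2$, the exponential decay for opposite-sign pairs, and the tail estimates are all correct; this is essentially Fujii's original route. The genuine gap is exactly at the step you flag as the main obstacle, and your proposed rescue of the like-sign part of $D$ does not work. After the integration by parts the weight $1/\rho$ has been consumed, so the object you are integrating against $u-\psi(u)$ is $\sum_{|\gamma|\le T}(N-u)^{\rho-1}$, \emph{not} $-\sum_{|\gamma|\le T}(N-u)^{\rho}/\rho$. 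Only the latter is (essentially) the prime-counting error at $N-u$ and hence $\ll (N-u)^{1/2}\log^2N$ on RH. The former is an unweighted Landau-type sum: writing $v=N-u$, it has $2N(T)\asymp T\log T$ terms each of modulus $v^{-1/2}$, and the best mean-square bound available from \eqref{N(T+1)-N(T)} is $\int_a^{2a}\bigl|\sum_{|\gamma|\le T}v^{\rho-1}\bigr|^2dv\ll T\log^2T$, i.e.\ root-mean-square size $\asymp\log N$ for $v\asymp N$ and $T\asymp N$ (the diagonal alone already contributes at this order), whereas the identification with $(\psi(v)-v)/v$ would require size $N^{-1/2}\log^2N$. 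The discrepancy is precisely the factor $N^{1/2}$ you are trying to recover, so the sentence ``the RH bound $\psi(x)-x\ll x^{1/2}\log^2x$, applied to both, delivers the necessary saving'' is not available; moreover the integral degenerates as $u\to N$, where $\sum_{|\gamma|\le T}(N-u)^{\rho-1}$ is only $\ll (N-u)^{-1/2}T\log T$. Even under the most charitable (and false) reading of your two factors, the dimensional count does not close: $\int_0^Nu^{1/2}(N-u)^{-1/2}du\asymp N$ would still leave $N\log^4N$. Controlling this double zero sum is exactly the obstruction that kept Fujii's error at $N^{4/3}(\log N)^{4/3}$, and it is not removed by merely resumming $D$ as a single integral against $\psi(u)-u$.

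For contrast, the paper does not touch the double sum over zeros at all. Theorem \ref{thm1} gives an exact identity in which the entire error beyond $2\psi_1(N)-\tfrac12(N-1)N$ is the circle-method integral $E(N)=\int_0^1(\Psi(r,\alpha)-I(r,\alpha))^2I_N(1/r,-\alpha)\,d\alpha$; Theorem \ref{thm2} bounds $|E(N)|$ via Gallagher's lemma by the variances $H(x)$ and $J(x,h)$ of \eqref{HandJ}; and Theorem \ref{thm4} inserts Cram\'er's bound \eqref{Cramer} and the Saffari--Vaughan bound \eqref{SV} to get $\mathcal{E}(N)\ll N\log^3N$. The $L^2$ cancellation you need in $D$ is thus imported from mean-square estimates for primes in short intervals; to repair your argument you would have to convert the like-sign part of $D$ into exactly such a statement (it is an $F(x,T)$-type object), at which point you are rebuilding the machinery of Sections 2 and 3.
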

A different proof of this theorem was given in \cite{Gold-Yang} along the same lines as \cite{BhowPuchta2010}. It was proved in \cite{BhowPuchta2010} that unconditionally
\begin{equation} \label{BPomega} \sum_{n\le N} \psi_2(n) = \frac{ N^2}{2} -2 \sum_{\rho} \frac{N^{\rho+1} }{\rho(\rho +1)}+\Omega( N\log\log N),\end{equation}
and therefore the error term in \eqref{LZ} is close to best possible. 

In this paper we combine and hopefully simplify the methods of \cite{BhowPuchta2010} and \cite{Lang-Zac1}. Our method is based on an exact form of Fujii's formula \eqref{FujiiThm} where the error term is explicitly given. We state this as Theorem \ref{thm1}.
We will relate this error term to the distributions of primes, and this can be estimated using the variance of primes in short intervals.

We follow the notation and methods of Montgomery and Vaughan \cite{MontgomeryVaughan1973} fairly closely in what follows.
Sums in the form of $\sum_\rho$ or $\sum_\gamma$ run over nontrivial zeros $\rho=\beta+i\gamma$ of the Riemann zeta function and all other sums run over the positive integers unless specified otherwise, so that $\sum_n = \sum_{n\ge 1}$ and $\sum_{n\le N} = \sum_{1\le n\le N}$. We use the power series generating function
\begin{equation} \label{Psi} \Psi(z) = \sum_{n} \Lambda(n) z^n \end{equation}
which converges for $|z|<1$, and obtain the generating function for $\psi_2(n)$ in \eqref{psi_2} directly since
\begin{equation} \Psi(z)^2 = \sum_{m,m'}\Lambda(m)\Lambda(m') z^{m+m'} = \sum_n \left(\sum_{m+m'=n}\Lambda(m)\Lambda(m')\right) z^{n}= \sum_{n} \psi_2(n) z^n. \end{equation}
Our goal is to use properties of $\Psi(z)$ to study averages of the coefficients $\psi_2(n)$ of $\Psi(z)^2$. 

Our version of Fujii's theorem is as follows. We take
\begin{equation} z=re(\alpha), \qquad e(u)= e^{2\pi i u},\end{equation}
where $0\le r < 1$, and define
\begin{equation} \label{I_N} I(r,\alpha) := \sum_n r^ne(\alpha n), \qquad I_N(r,\alpha) := \sum_{n\le N} r^ne(\alpha n). \end{equation}
We also, accordingly, write $\Psi(z) = \sum_{n} \Lambda(n) r^ne(\alpha n)$ as $\Psi(r,\alpha)$.
\begin{Theorem} \label{thm1} For $N\ge 2$, we have 
\begin{equation} \label{th1result} \sum_{n\le N} \psi_2(n) = \frac{N^2}{2} - 2\sum_\rho \frac{N^{\rho +1}}{\rho(\rho +1)}
-( 2\log 2\pi-\frac12)N + 2\frac{\zeta '}{\zeta}(-1) - \sum_k \frac{N^{1-2k}}{k(2k-1)} + E(N) , \end{equation}
where, for $0<r<1$,
\begin{equation} E(N) := \int_0^1 (\Psi(r,\alpha)-I(r,\alpha))^2 I_N(1/r,-\alpha)\, d\alpha . \end{equation} In particular, we have
\begin{equation} \label{specialcase} \sum_{n\le N} \psi_2(n) = \frac{N^2}{2} - 2\sum_\rho \frac{N^{\rho +1}}{\rho(\rho +1)}
+ E(N) +O(N). \end{equation}
\end{Theorem}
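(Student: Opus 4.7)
The plan is to extract the partial sum by Parseval on the circle $|z|=r$ with $0<r<1$, decompose $\Psi = I + (\Psi-I)$, and identify the resulting integrals one by one. Absolute convergence of $\Psi(r,\alpha)^2 = \sum_n \psi_2(n)\, r^n e(n\alpha)$ on $|z|=r$, together with orthogonality of $e(n\alpha)$ on $[0,1]$, yields
$$\sum_{n\le N}\psi_2(n) = \int_0^1 \Psi(r,\alpha)^2\, I_N(1/r,-\alpha)\,d\alpha.$$
Expanding $\Psi^2 = I^2 + 2I(\Psi - I) + (\Psi-I)^2$, the third summand integrates to $E(N)$ by definition. For the first two, coefficient extraction gives $[z^n]I^2 = n-1$ and $[z^n](I\Psi) = \psi(n-1)$ for $n\ge 2$, so
$$\int_0^1 I^2\, I_N(1/r,-\alpha)\,d\alpha = \frac{N(N-1)}{2}, \qquad \int_0^1 I(\Psi-I)\, I_N(1/r,-\alpha)\,d\alpha = \sum_{m\le N-1}\psi(m) - \frac{N(N-1)}{2}.$$

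Next I would use the clean telescoping identity $\sum_{m\le N-1}\psi(m) = \psi_1(N)$, where $\psi_1(x) := \sum_{n\le x}\Lambda(n)(x-n)$; this follows from $\sum_{m\le M}\psi(m) = \psi_1(M) + \psi(M)$ (swap order of summation), combined with $\psi_1(N) - \psi_1(N-1) = \psi(N-1)$. Putting this together gives
$$\sum_{n\le N}\psi_2(n) = 2\psi_1(N) - \frac{N(N-1)}{2} + E(N).$$

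The last step is to invoke the classical Landau explicit formula
$$\psi_1(x) = \frac{x^2}{2} - \sum_\rho \frac{x^{\rho+1}}{\rho(\rho+1)} - (\log 2\pi)\,x + \frac{\zeta'}{\zeta}(-1) - \sum_{k\ge 1}\frac{x^{1-2k}}{2k(2k-1)},$$
an exact identity for $x\ge 1$, obtained by shifting the Mellin contour $\frac{1}{2\pi i}\int_{(c)}\bigl(-\frac{\zeta'}{\zeta}(s)\bigr)\frac{x^{s+1}}{s(s+1)}\,ds$ to $-\infty$ and collecting residues at $s=1$ (giving $x^2/2$), $s=0$ (where $\frac{\zeta'}{\zeta}(0)=\log 2\pi$), $s=-1$ (where the simple pole of $1/(s+1)$ contributes $\frac{\zeta'}{\zeta}(-1)$), the nontrivial zeros $\rho$, and the trivial zeros $-2k$. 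The sum over $\rho$ converges absolutely thanks to the $1/(s(s+1))$ damping and \eqref{N(T+1)-N(T)}. Substituting and combining the linear contributions $N/2$ (from $-N(N-1)/2$) with $-2(\log 2\pi)N$ produces \eqref{th1result} exactly; for \eqref{specialcase}, the constant $2\frac{\zeta'}{\zeta}(-1)$, the linear term $-(2\log 2\pi - \tfrac12)N$, and the trivial-zero series (which is $O(N^{-1})$) are all $O(N)$.

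The whole argument is essentially bookkeeping apart from the explicit formula for $\psi_1$, which is the only genuinely analytic ingredient. Because the $1/(s(s+1))$ weight makes the series over zeros absolutely convergent without further hypothesis, the formula is a true equality rather than an asymptotic, and this is precisely what allows Theorem \ref{thm1} to be stated as an equality and guarantees that $E(N)$ is well defined independently of the auxiliary parameter $r\in(0,1)$.
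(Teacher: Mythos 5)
Your proposal is correct and follows essentially the same route as the paper: both extract $\sum_{n\le N}\psi_2(n)$ via the kernel $I_N(1/r,-\alpha)$, reduce to $2\psi_1(N)-\tfrac12 N(N-1)+E(N)$ through the decomposition $\Psi^2=I^2+2I(\Psi-I)+(\Psi-I)^2$, and finish with the exact explicit formula for $\psi_1$. The only cosmetic difference is that the paper assembles the weighted identity in the $z$-variable before truncating, while you truncate each piece separately; the bookkeeping is identical.
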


The prime number theorem is equivalent to 
\begin{equation} \label{PNT} \psi(x) := \sum_{n\le x}\Lambda (n) \sim x ,\qquad \text{as} \quad x\to \infty. \end{equation}
It was shown in \cite{BhowPuchta2010} that the error term $E(N)$ in \Cref{thm1} can be estimated using the functions\footnote[2]{ In \cite{BhowPuchta2010} and also \cite{Gold-Yang} there is a third integral error term that is also needed, but the method of \cite{Lang-Zac1} and here avoids this term.} 
\begin{equation} \label{HandJ} H(x) := \int_0^x (\psi(t) -t)^2 \, dt \qquad \text{and} \quad J(x,h) := \int_0^x (\psi(t+h) -\psi(t) -h)^2 \, dt. \end{equation}
The integral $H(x)$ was studied by Cram\'er \cite{Cramer21} in 1921; he proved assuming the Riemann Hypothesis that
\begin{equation} \label{Cramer} H(x) \ll x^2 . \end{equation}
Selberg \cite{Selberg} was the first to study variances like $J(x,h)$ and obtain results on primes from this type of variance, both unconditionally and on the Riemann Hypothesis. The estimate we need is a refinement of Selberg's result which was first obtained by Saffari and Vaughan \cite{SaffariVaughan}; they proved, assuming the Riemann Hypothesis, that for $1\le h \le x$,
\begin{equation} \label{SV} J(x,h) \ll hx\log^2(\frac{2x}{h}). \end{equation}
By a standard argument using Gallagher's lemma \cite[Lemma 1.9]{Montgomery71} we obtain the following unconditional bound for $E(N)$. 

\begin{Theorem} \label{thm2} Let $\log_2\!N$ denote the logarithm base 2 of $N$. Then for $N\ge 2$, we have
\begin{equation} \label{E(N)bound}
\begin{aligned}
|E(N)| \le \mathcal{E}(N) &:= \int_0^1 |\Psi(r,\alpha)-I(r,\alpha)|^2 |I_N(1/r,-\alpha)|\, d\alpha \\
&\ll \sum_{0\le k < \log_2\!N} \frac{N}{2^k} \mathcal{W}\left(N,\frac{N}{2^{k+2}}\right),
\end{aligned}
\end{equation}
where, for $1\le h \le N$,
\begin{equation}\label{Wbound}
\begin{aligned}
\mathcal{W}(N,h) &:= \int_0^{1/2h} \left| \sum_n ( \Lambda(n)-1) e^{-n/N} e(n\alpha) \right|^2 \, d\alpha \\
&\ll \frac1{h^2} H(h) + \frac{1}{N^2}\sum_j \frac1{2^j}H(jN) + \frac1{h^2}\sum_j \frac1{2^j}J(jN,h) + \frac{N}{h^2}.
\end{aligned}
\end{equation}
\end{Theorem}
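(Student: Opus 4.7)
First, the bound $|E(N)| \le \mathcal{E}(N)$ is immediate from the triangle inequality applied to the integral defining $E(N)$, and it holds for every admissible $r$. For the remaining estimates I fix $r = e^{-1/N}$, so that $\Psi(r,\alpha) - I(r,\alpha) = \sum_n (\Lambda(n)-1)\,e^{-n/N} e(n\alpha)$ is exactly the trigonometric sum whose square defines $\mathcal{W}(N,h)$.

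To bound the other factor $I_N(1/r,-\alpha)$, I sum the geometric progression in $w = r^{-1}e(-\alpha)$; using $r^{-N} = e$, a short calculation yields
\[
  |I_N(1/r,-\alpha)| \ll \min\!\left(N,\; \frac{1}{\|\alpha\|}\right)
\]
on $\alpha \in [-\tfrac{1}{2},\tfrac{1}{2}]$. Since $|\Psi(r,\alpha)-I(r,\alpha)|^2$ is even in $\alpha$, I split $\mathcal{E}(N)$ dyadically over $|\alpha| \in [2^k/N,\,2^{k+1}/N]$ for $0 \le k < \log_2 N$, absorbing the central piece $|\alpha| \le 2/N$ into the $k=0$ term. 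On the $k$-th range the bound $|I_N(1/r,-\alpha)| \ll N/2^k$ gives
\[
  \frac{N}{2^k}\int_0^{2^{k+1}/N} |\Psi(r,\alpha)-I(r,\alpha)|^2\,d\alpha \;=\; \frac{N}{2^k}\,\mathcal{W}\!\left(N,\,N/2^{k+2}\right),
\]
which sums to the first claimed bound.

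For $\mathcal{W}(N,h)$ itself I apply Gallagher's Lemma \cite[Lemma~1.9]{Montgomery71} with $\delta = 1/(2h)$, producing
\[
  \mathcal{W}(N,h) \ll h^{-2}\!\int_{-\infty}^{\infty} |M(u)|^2\,du, \qquad M(u) := \sum_{u-h < n \le u+h} (\Lambda(n)-1)\,e^{-n/N}.
\]
For $u \ge h$, writing $R(y) := \psi(y)-y$ and noting $|e^{-(u+h)/N} - e^{-(u-h)/N}| \ll (h/N)e^{-u/N}$, partial summation decomposes $M(u)$ into
\[
  e^{-(u+h)/N}[\psi(u+h)-\psi(u-h)-2h] + (e^{-(u+h)/N}-e^{-(u-h)/N})\,R(u-h) + \tfrac{1}{N}\!\int_{u-h}^{u+h}\! e^{-t/N} R(t)\,dt + O(e^{-u/N}).
\]
Squaring via $|a+b+c+d|^2 \le 4(|a|^2+|b|^2+|c|^2+|d|^2)$, integrating over $u \ge h$, and partitioning $u \in [jN,(j+1)N]$ with weight $e^{-2j} \ll 2^{-j}$ produces respectively $h^{-2}\sum_j 2^{-j} J(jN,h)$ (via $J(jN,2h) \ll J((j+1)N,h)$), $N^{-2}\sum_j 2^{-j} H(jN)$ (with the analogous contribution from the smoothing integral absorbed), and $N/h^2$. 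For the complementary range $|u|\le h$, $M(u)$ reduces to an initial segment whose partial-summation form is essentially $R(u+h) + O(1)$; its $L^2$ contribution after the factor $h^{-2}$ is $\ll H(2h)/h^2 \ll H(h)/h^2$, yielding the remaining term.

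The chief obstacle is the partial-summation bookkeeping: one must separate the short-range increment $\psi(u+h)-\psi(u-h)-2h$ from the long-range remainder $R(u-h)$ while carefully tracking the differentiated exponential weight, verify that the window of length $2h$ coming from Gallagher reduces to step-size $h$ in the $J$-integrals, and ensure that the boundary piece $|u|\le h$ yields exactly $H(h)/h^2$ rather than a larger quantity. A secondary care point is confirming that the outermost and innermost dyadic pieces in $\alpha$ can be absorbed cleanly into the single sum over $0 \le k < \log_2 N$ without logarithmic loss.
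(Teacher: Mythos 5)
Your overall route is the same as the paper's: fix $r=e^{-1/N}$, bound $|I_N(1/r,-\alpha)|\ll\min(N,1/\|\alpha\|)$, decompose dyadically in $\alpha$ so that the $k$-th piece is $\frac{N}{2^k}\mathcal{W}(N,N/2^{k+2})$, apply Gallagher's lemma, and estimate the windowed sums by partial summation against $R$, splitting the $u$-axis into blocks $[jN,(j+1)N]$ with weights $2^{-j}$. The genuine problem is the Gallagher step. To control $\int_0^{1/(2h)}$ --- which after using the evenness of $|S(\alpha)|^2$ is half of an interval of total length $1/h$ --- the lemma permits a window of length at most $h$; this is exactly why the paper uses the sum over $x<n\le x+h$. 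Your window $u-h<n\le u+h$ has length $2h$, and the inequality $\int_0^{1/(2h)}|S(\alpha)|^2\,d\alpha\ll h^{-2}\int_{-\infty}^{\infty}|M(u)|^2\,du$ is then false for general coefficients: in the Plancherel proof the kernel $\bigl(\sin(2\pi hu)/(2\pi hu)\bigr)^2$ vanishes at $u=\pm 1/(2h)$, and concretely, for $h\in\mathbb{N}$ and $a_n=e(-n/(2h))$ for $1\le n\le M$, every window sum over $2h$ consecutive integers vanishes, so the right-hand side is $O(h)$ while the left-hand side is $\gg M$. A window of length $2h$ only controls $\int_0^{1/(4h)}$.

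This is repairable --- take the window $x<n\le x+h$ --- and the repair also removes the two extra pieces of bookkeeping your symmetric window forced on you. First, the reduction $J(jN,2h)\ll J((j+1)N,h)$ becomes unnecessary (your justification of it is fine, but it is no longer needed). Second, and more importantly, the boundary range then contributes $\int_0^{h}R(v)^2\,dv\ll H(h)+h$ directly, whereas your version produces $H(2h)$ and you assert $H(2h)\ll H(h)$ without proof; since $H$ is increasing this inequality a priori goes the wrong way, it is not obviously available unconditionally, and the theorem is stated with $H(h)$, so it cannot simply be asserted. Note also that the boundary contribution is $H(h)+h$, not $H(h)$; the additive $h$ (coming from $t-\lfloor t\rfloor$, or from the $O(1)$ terms in your normalization $R(y)=\psi(y)-y$) is absorbed by the final term $N/h^2$ since $h\le N$. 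With the window corrected, your argument coincides with the paper's proof.
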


Not only can Theorem \ref{thm2} be applied to the error term in Fujii's theorem, but it can also be used in bounding the error in the prime number theorem.

\begin{Theorem} \label{thm3} For $N\ge 2$ and $0<r<1$, we have 
\begin{equation}\label{PNTerror} \psi(N) - N = \int_0^1 (\Psi(r,\alpha)-I(r,\alpha)) I_N(1/r,-\alpha)\, d\alpha , \end{equation} 
and
\begin{equation} \label{PNTbound} \psi(N) - N \ll \sqrt{\mathcal{E}(N)\log N}. \end{equation}
\end{Theorem}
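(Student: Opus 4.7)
The plan is to establish \eqref{PNTerror} directly from orthogonality, and then deduce \eqref{PNTbound} by Cauchy--Schwarz together with an $L^1$ bound on the kernel $I_N(1/r,-\alpha)$ for a judicious choice of $r$.

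For the identity, I would expand $\Psi(r,\alpha)-I(r,\alpha) = \sum_n (\Lambda(n)-1)r^n e(\alpha n)$, which is absolutely convergent for $0<r<1$, and multiply by the finite sum $I_N(1/r,-\alpha) = \sum_{m\le N} r^{-m} e(-\alpha m)$. Integrating term by term over $[0,1]$ and using $\int_0^1 e(\alpha(n-m))\,d\alpha = \1_{n=m}$ picks out the diagonal $n=m$, in which $r^n r^{-n}=1$, yielding $\sum_{n\le N}(\Lambda(n)-1) = \psi(N)-\lfloor N\rfloor$. This equals $\psi(N)-N$ up to $O(1)$, which is absorbed in the bound \eqref{PNTbound}.

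For the bound, take absolute values inside the integral, factor $|I_N| = |I_N|^{1/2}\cdot |I_N|^{1/2}$, and apply Cauchy--Schwarz pairing $|\Psi-I|\,|I_N|^{1/2}$ against $|I_N|^{1/2}$ to obtain
\begin{equation*}
|\psi(N)-N| \;\ll\; \sqrt{\mathcal{E}(N)}\;\sqrt{\int_0^1 |I_N(1/r,-\alpha)|\,d\alpha}.
\end{equation*}
Since \eqref{PNTerror} is valid for every $r\in(0,1)$, I am free to optimize. Taking $r=e^{-1/N}$ makes $r^{-m}=e^{m/N}$ stay bounded by $e$ on $m\le N$; summing the resulting geometric progression yields the pointwise estimate $|I_N(1/r,-\alpha)| \ll 1/(\|\alpha\|+1/N)$, where $\|\alpha\|$ denotes the distance from $\alpha$ to the nearest integer. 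Integration of this bound over $[0,1]$ contributes the required factor $\log N$, and \eqref{PNTbound} follows.

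The main technical point is the balancing in the choice of $r$: too small an $r$ lets the amplification factor $r^{-m}$ blow up across the range $m\le N$, while an $r$ too close to $1$ loses the correct $1/N$ scale of the singularity of $I_N(1/r,-\alpha)$ near $\alpha=0$. The choice $r\asymp e^{-1/N}$ is the natural scale reconciling both constraints, and once the pointwise bound is in hand, the $L^1$ estimate is routine.
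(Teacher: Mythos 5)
Your proposal is correct and follows essentially the same route as the paper: the identity comes from term-by-term orthogonality (the paper's \eqref{truncate} applied to $\Lambda_0(n)=\Lambda(n)-1$, where $\lfloor N\rfloor=N$ since $N$ is an integer, so no $O(1)$ slack is even needed), and the bound comes from Cauchy--Schwarz after splitting $|I_N|=|I_N|^{1/2}|I_N|^{1/2}$, with $r=e^{-1/N}$ and the kernel estimate \eqref{INbound} giving $\int_0^1|I_N|\,d\alpha\ll\log N$. No substantive differences.
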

The formula \eqref{PNTerror} has already been used for a related problem concerning averages of Goldbach representations 
\cite[Eq. (7)]{BhowRuzsa2018}, and similar formulas are well known \cite[Eq. (5.20)]{Kouk2019}.

For our first application, we assume the Riemann Hypothesis and use \eqref{Cramer} and \eqref{SV} to recover \eqref{LZ}, and also obtain the classical Riemann Hypothesis error in the prime number theorem
due to von Koch in 1901 \cite{vonKoch1901}.
\begin{Theorem} \label{thm4}Assuming the Riemann Hypothesis, then $\mathcal{E}(N) \ll N\log^3\!N$ and $\psi(N) = N +O(N^{1/2}\log^2\!N)$.
\end{Theorem}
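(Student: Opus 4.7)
The plan is to feed the two conditional estimates \eqref{Cramer} and \eqref{SV} directly into the unconditional bound of Theorem \ref{thm2}, and then to derive the prime number theorem estimate by plugging the resulting bound on $\mathcal{E}(N)$ into Theorem \ref{thm3}. No new analytic input is required; the work is entirely in tracking the $k$-dependence across the dyadic sum.

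First I fix $h=h_k := N/2^{k+2}$ and estimate $\mathcal{W}(N,h_k)$ term by term. Using $H(x)\ll x^2$ under RH, the first term satisfies $h_k^{-2}H(h_k)\ll 1$, and the second satisfies
\begin{equation*}
\frac{1}{N^2}\sum_j \frac{1}{2^j}H(jN) \ll \frac{1}{N^2}\sum_j \frac{(jN)^2}{2^j} \ll 1,
\end{equation*}
uniformly in $k$. For the $J$-term I use \eqref{SV}, noting that $\log(2jN/h_k) = \log j + (k+3)\log 2$, so $\log^2(2jN/h_k)\ll (\log j)^2 + k^2$. Hence
\begin{equation*}
\frac{1}{h_k^2}\sum_j \frac{1}{2^j}J(jN,h_k) \ll \frac{N}{h_k}\sum_j \frac{j((\log j)^2+k^2)}{2^j}\ll \frac{N}{h_k}(1+k^2) \ll 2^k(1+k^2).
\end{equation*}
Finally the trivial term gives $N/h_k^2 \ll 4^k/N$. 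Combining,
\begin{equation*}
\mathcal{W}(N,h_k)\ll 1 + 2^k(1+k^2) + \frac{4^k}{N}.
\end{equation*}

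Next I multiply by $N/2^k$ and sum over $0\le k<\log_2\!N$. The contributions are, respectively, $\sum_k N/2^k \ll N$, $\sum_k N(1+k^2)\ll N(\log N)^3$, and $\sum_k 2^k \ll N$. Adding these yields $\mathcal{E}(N)\ll N\log^3\!N$, which is the first assertion. For the second assertion I substitute into \eqref{PNTbound} to obtain
\begin{equation*}
\psi(N)-N \ll \sqrt{\mathcal{E}(N)\log N}\ll \sqrt{N\log^4\!N} = N^{1/2}\log^2\!N.
\end{equation*}

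There is no real obstacle beyond bookkeeping; the only point that requires a moment of care is handling the $\log^2(2jN/h_k)$ factor in the Saffari--Vaughan bound, where the $k$-dependence hidden in $N/h_k = 2^{k+2}$ produces the $k^2$ that, once summed dyadically, is exactly what yields the $\log^3\!N$ saving. As a sanity check, the $N(\log N)^3$ bound on $\mathcal{E}(N)$ recovers via \eqref{specialcase} the Languasco--Zaccagnini estimate \eqref{LZ}, and the $N^{1/2}\log^2\!N$ estimate is precisely von Koch's classical bound.
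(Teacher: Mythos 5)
Your proof is correct and follows essentially the same route as the paper: substitute Cram\'er's bound \eqref{Cramer} and the Saffari--Vaughan bound \eqref{SV} into Theorem \ref{thm2} and then apply \eqref{PNTbound}. The only cosmetic difference is that the paper bounds $\log^2(2x/h)\ll\log^2 x$ uniformly to get $\mathcal{W}(N,h)\ll (N/h)\log^2\!N$ for all $1\le h\le N$ and then picks up the final factor of $\log N$ from the $\ll\log N$ dyadic values of $k$, whereas you keep the $k$-dependence explicit and recover the same $N\log^3\!N$ from $\sum_{k<\log_2\!N}k^2$.
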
 
For our second application we will strengthen the results in Theorem \ref{thm4} by assuming conjectures on bounds for $J(x,h)$ which we will prove to be consequences of conjectured bounds related to Montgomery's pair correlation conjecture. Montgomery introduced the function, for $x>0$ and $T\ge 3$,
\begin{equation} \label{Fdef} F(x,T) := \sum_{0<\gamma,\gamma'\le T} x^{i(\gamma -\gamma')} w(\gamma -\gamma') , \qquad w(u) = \frac{4}{4+u^2},\end{equation} 
where the sum is over the imaginary parts $\gamma$ and $\gamma'$ of zeta-function zeros. By \cite[Lemma 8]{GoldMont} $F(x,T)$ is real, $F(x,T)\ge 0$, $F(x,T)=F(1/x,T)$, and assuming the Riemann Hypothesis 
\begin{equation} \label{MonThm} F(x,T) = \frac{T}{2\pi} \left( x^{-2}\log^2 T + \log x\right) \left( 1 + O\left(\sqrt{\frac{\log \log T}{\log T}}\right)\right)\end{equation}
 uniformly for $1\le x\le T$.
For larger $x$, Montgomery \cite{Montgomery72} conjectured that, for every fixed $A>1$, 
\begin{equation} \label{Fconj} F(x,T) \sim \frac{T}{2\pi}\log T \end{equation}
holds uniformly for $T\le x \le T^A$. This conjecture implies the pair correlation conjecture for zeros of the zeta-function \cite{Gold2005}.
For all $x>0$ we have the trivial (and unconditional) estimate
\begin{equation} \label{Ftrivial} F(x,T)\le F(1,T) \ll T\log^2 T \end{equation}
which follows from \eqref{N(T+1)-N(T)}.

The main result in \cite{GoldMont} is the following theorem connecting $F(x,T)$ and $J(x,h)$.
\begin{theorem*}[Goldston-Montgomery] Assume the Riemann Hypothesis. Then the $F(x,T)$ conjecture \eqref{Fconj} is equivalent to the conjecture that for every fixed $\epsilon>0$, 
\begin{equation} \label{GM} J(x,h) \sim h x \log(\frac{x}{h}) \end{equation}
holds uniformly for $1\le h\le x^{1-\epsilon} $.
\end{theorem*}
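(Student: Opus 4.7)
The plan is to recast $J(x,h)$, via the truncated explicit formula and a Plancherel-type identity, as an integral of $F(y,T)$ against a concentrated non-negative kernel, and then transfer asymptotic information between the two sides by a Tauberian argument.

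Step 1 (prime side). Assuming the Riemann Hypothesis, I would start from the truncated explicit formula
\begin{equation*}
\psi(t+h)-\psi(t)-h \;=\; -\sum_{|\gamma|\le T}\frac{(t+h)^{\rho}-t^{\rho}}{\rho} \;+\; O\!\left(\frac{t\log^{2}(tT)}{T}\right),
\end{equation*}
valid for $1\le h\le t\le x$, and insert it into $J(x,h)=\int_{0}^{x}(\psi(t+h)-\psi(t)-h)^{2}\,dt$. Choosing $T$ slightly larger than $x/h$ makes the explicit-formula error negligible against the expected main term $hx\log(x/h)$.

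Step 2 (Parseval/Mellin step). Expanding the square and interchanging sum and integral produces a double sum over pairs of zeros of the inner integrals
\begin{equation*}
\int_{0}^{x}\bigl((t+h)^{\rho}-t^{\rho}\bigr)\overline{\bigl((t+h)^{\rho'}-t^{\rho'}\bigr)}\,dt.
\end{equation*}
After the substitution $t=hu$, a standard Mellin/Plancherel computation evaluates these, up to acceptable errors, as $|\rho|^{-1}|\rho'|^{-1}$ times an explicit smooth kernel depending only on $\gamma-\gamma'$ and the scale $x/h$, and comparable to the Montgomery weight $w(\gamma-\gamma')$. Reassembling the double sum recasts
\begin{equation*}
J(x,h) \;=\; \int_{0}^{\infty} F(y,T)\,K(y;x,h)\,\frac{dy}{y} \;+\; \text{error},
\end{equation*}
with $K$ non-negative and concentrated at scale $y\asymp x/h$.

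Step 3 (transfer). Under \eqref{Fconj}, one has $F(y,T)\sim(T/2\pi)\log T$ on the bulk of the support of $K$ --- the range $T\le y\le T^{A}$ is precisely what corresponds to $1\le h\le x^{1-1/A}$ --- so the integral against $K$ collapses to the asymptotic $hx\log(x/h)$; the tails in $y$ are controlled by the trivial bound \eqref{Ftrivial} together with \eqref{N(T)}. For the converse, given the $J$-asymptotic for all admissible $h$, one knows the value of $\int F(y,T)K(y;x,h)\,dy/y$ as a function of the scale $y_{0}=x/h$; the positivity $F\ge 0$ combined with the upper bound \eqref{Ftrivial} allows a Tauberian argument (testing against suitable mollifications of $K$) to upgrade the integral asymptotic to the pointwise asymptotic for $F(y,T)$.

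The main obstacle is the precise kernel identification in Step 2: one must extract the Montgomery weight $w(u)=4/(4+u^{2})$ (or an equivalent) from the Mellin transform of $(1+u)^{\rho}-u^{\rho}$, and verify that all residual errors --- from truncation at height $T$, from the $|\gamma|>T$ tails, from replacing differences by derivatives, and from the transition between the natural kernel and the Montgomery kernel --- stay uniformly below $hx\log(x/h)$ throughout the conjectured ranges of $x$, $h$, and $T$. The Tauberian step in the converse is also delicate, since $K$ averages $F$ on a logarithmic scale, and it is only the non-negativity of $F$ together with the non-trivial upper bound that rescues a pointwise conclusion from the integrated one.
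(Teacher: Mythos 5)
First, be aware that the paper does not prove this statement: it is quoted from Goldston--Montgomery \cite{GoldMont}, and what the paper actually proves is a one-directional, upper-bound analogue (\Cref{thm8}, via \Cref{lem1}, \Cref{lem2} and \Cref{lem3}). Your skeleton --- truncated explicit formula, a Plancherel step producing a positive average of $F$, transfer of asymptotics using $F\ge 0$ and \eqref{Ftrivial}, and a Tauberian lemma for the converse --- is the correct outline of the \cite{GoldMont} argument and of its adaptation in Section 3 of the paper. But two of your steps, as written, would fail.

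In Step 1 you truncate the explicit formula at height $T$ ``slightly larger than $x/h$.'' The truncation error then contributes about $\int_0^x \bigl(t\log^2(tT)/T\bigr)^2\,dt \asymp x^3\log^4\!x/T^2 \asymp h^2x\log^4\!x$ to $J(x,h)$, which swamps the target main term $hx\log(x/h)$ as soon as $h\gg \log^4\!x$. One must truncate at $Z\asymp x\log^{C}\!x$ independently of $h$, as in \eqref{Zdelta}; the localization to zeros of height $\lesssim x/h$ must come from the decay of the coefficients, not from the truncation. In Step 2, the claim that $\int_0^x\bigl((t+h)^{\rho}-t^{\rho}\bigr)\overline{\bigl((t+h)^{\rho'}-t^{\rho'}\bigr)}\,dt$ reduces, up to the factor $|\rho|^{-1}|\rho'|^{-1}$, to a kernel depending only on $\gamma-\gamma'$ is not correct: with the additive increment the natural weights depend on $t$ and on each ordinate separately, and no clean $F$-average emerges. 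The standard fix, which your sketch is missing, is to first pass from $J(x,h)$ to the multiplicative variance $\mathcal{J}(x,\delta)$ of \eqref{calJ} via the Saffari--Vaughan lemma (\Cref{lem3}); then the coefficients $a(\rho)=((1+\delta)^{\rho}-1)/\rho$ are $t$-independent, the substitution $t=xe^{-y}$ plus Plancherel produces the factor $e^{-2|y|}$ whose Fourier transform is exactly Montgomery's weight $w$, and what results is an average of $F(x,\tau)$ over the \emph{height} variable $\tau\lesssim 1/\delta\asymp x/h$ with weight $\min(\delta^{2},\tau^{-2})$, the first argument staying near $x$ --- not an average over the first argument at scale $x/h$ as you wrote. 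With these corrections the forward implication is precisely the chain \eqref{Fthm8} $\Rightarrow$ \eqref{calJthm8} $\Rightarrow$ \eqref{Jthm8} proved in the paper (in asymptotic rather than upper-bound form); the converse direction genuinely requires the Tauberian lemma of \cite{GoldMont}, which you only gesture at and which is not reproduced anywhere in this paper.
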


Adapting the proof of this last theorem we obtain the following results.
\begin{Theorem} \label{thm5} Assume the Riemann Hypothesis. 

{\bf (A)} If for any $A>1$ and $T\ge 2$ we have
\begin{equation} \label{A-Fbound} F(x,T) = o( T\log^2 T) \qquad \text{ uniformly for} \quad T\le x \le T^{A}, \end{equation}
then this implies, for $x\ge 2$, 
\begin{equation}\label{A-Jbound} J(x,h) = o(h x \log^2\!x),\qquad \text{for} \quad 1\le h\le x,\end{equation}
and this bound implies $\mathcal{E}(N) = o(N\log^3\!N)$ and $\psi(N) = N +o(N^{1/2}\log^2\!N)$.

{\bf (B)} If for $T\ge 2$
\begin{equation} \label{B-Fbound} F(x,T) \ll T\log x \qquad \text{holds uniformly for} \quad T\le x \le T^{\log T}, \end{equation}
then we have, for $x\ge 2$, 
\begin{equation}\label{B-Jbound} J(x,h) \ll h x \log x,\qquad \text{for} \quad 1\le h\le x,\end{equation}
and this bound implies $\mathcal{E}(N) \ll N\log^2\!N$ and $\psi(N) = N +O(N^{1/2}(\log N)^{3/2})$.
\end{Theorem}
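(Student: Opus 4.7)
The plan is to adapt the Goldston--Montgomery equivalence proof to derive the $J$-bound from the $F$-hypothesis, then feed the $J$-bound into \Cref{thm2} and \Cref{thm3}. The argument splits into two stages: (i) deduce \eqref{A-Jbound} from \eqref{A-Fbound} (respectively \eqref{B-Jbound} from \eqref{B-Fbound}), which is the substantive step; and (ii) use the $J$-bound to control $\mathcal{E}(N)$ and $\psi(N)-N$, which is essentially book-keeping.

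For stage (i), under RH Montgomery's formula \eqref{MonThm} already controls $F(x,T)$ throughout $1\le x\le T$; the hypothesis \eqref{A-Fbound} (resp.\ \eqref{B-Fbound}) extends this control into $T\le x\le T^A$ (resp.\ $T^{\log T}$). I would then follow the steps of the Goldston--Montgomery proof, which translates the mean square of $F(x,T)$ into that of $J(x,h)$ via a Parseval/Mellin identity together with Gaussian-window smoothing, under the heuristic correspondence $T\sim x/h$. Whereas their argument converts $F\sim \tfrac{T}{2\pi}\log T$ into $J(x,h)\sim hx\log(x/h)$, here I need instead that $F(x,T)=o(T\log^2 T)$ produces $J(x,h)=o(hx\log^2 x)$, and that $F(x,T)\ll T\log x$ produces $J(x,h)\ll hx\log x$. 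The main obstacle is verifying that each step in the Tauberian translation preserves the quantitative form (an $o$-bound or an $O$-bound in place of an asymptotic) uniformly in the parameters, and that the given $x$-range in $F$ really sweeps out the full target range $1\le h\le x$ in $J$; for $h$ close to $x$, one may supplement the argument with the unconditional Saffari--Vaughan bound \eqref{SV}, which is already smaller than what is required.

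For stage (ii), I would insert the $J$-bound into \Cref{thm2}. Under RH, Cram\'er's estimate \eqref{Cramer} gives $H(x)\ll x^2$, so the terms $h^{-2}H(h)$ and $N^{-2}\sum_j 2^{-j}H(jN)$ in \eqref{Wbound} are $O(1)$, while $N/h^2$ is negligible for $h\le N$. The dominant contribution is $h^{-2}\sum_j 2^{-j}J(jN,h)$. Under the Part (B) bound this is $\ll Nh^{-1}\log N$, giving $\mathcal{W}(N,h)\ll Nh^{-1}\log N$; substituting $h=N/2^{k+2}$ into \eqref{E(N)bound}, the dyadic sum telescopes to $\mathcal{E}(N)\ll N\log^2 N$. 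The same computation under Part (A) gives $\mathcal{W}(N,h)=o(Nh^{-1}\log^2 N)$ and $\mathcal{E}(N)=o(N\log^3 N)$. \Cref{thm3} then converts these via $\psi(N)-N\ll\sqrt{\mathcal{E}(N)\log N}$ into the claimed $o(N^{1/2}\log^2 N)$ for Part (A) and $O(N^{1/2}(\log N)^{3/2})$ for Part (B), completing the proof.
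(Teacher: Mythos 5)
Your stage (ii) is correct and matches the paper: inserting Cram\'er's bound $H(x)\ll x^2$ and the assumed $J$-bound into \eqref{Wbound} gives $\mathcal{W}(N,h)\ll Nh^{-1}\log N$ (resp.\ $o(Nh^{-1}\log^2N)$), and \eqref{E(N)bound} together with \eqref{PNTbound} then yields the stated bounds on $\mathcal{E}(N)$ and $\psi(N)-N$. (A small slip: the dyadic sum does not ``telescope''; it is simply $\ll\log N$ comparable terms. Also \eqref{SV} is a Riemann Hypothesis result, not unconditional, though that is harmless since RH is assumed throughout.)

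The problem is stage (i), which is where the entire content of the theorem lies. You correctly identify the strategy --- adapt the Goldston--Montgomery transference and use Saffari--Vaughan for $h$ near $x$ --- but then you write that ``the main obstacle is verifying that each step in the Tauberian translation preserves the quantitative form uniformly in the parameters'' and do not verify it. That obstacle \emph{is} the proof. The paper devotes most of Section 3 to it, proving a general Theorem \ref{thm8} via three ingredients your sketch does not supply: (a) Lemma \ref{lem1}, which extends the hypothesised range of the $F$-bound to all of $2\le T\le x^A$ using the trivial bound \eqref{Ftrivial} for small $T$ and Montgomery's RH asymptotic \eqref{MonThm} for $T\ge x$, and likewise extends the $\mathcal{J}$- and $J$-ranges using \eqref{SV2} and \eqref{SV}; (b) Lemma \ref{lem2}, the Plancherel and truncated-explicit-formula argument that bounds the \emph{multiplicative} short-interval variance $\mathcal{J}(x,\delta)$ of \eqref{calJ} by $x^2G(x,\delta)+O(\delta x^2)$ and hence by a dyadic weighted sum of values $F(x,2^k/\delta)$, with all the error terms from \eqref{expliciterror} estimated; and (c) the Saffari--Vaughan Lemma \ref{lem3} plus a dyadic descent on $J(x,h)-J(x/2,h)$ to convert the $\mathcal{J}(x,\delta)$-bound into the $J(x,h)$-bound, with \eqref{SV} closing off the final term $J(x/2^k,h)$. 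Your proposal never introduces the intermediate object $\mathcal{J}(x,\delta)$, which is the variance that the Fourier-analytic step actually produces, and for part (A) it also omits the diagonalisation the paper performs (taking $\mathcal{L}(x)=\epsilon\log^2x$ and $A=1/\sqrt{\epsilon}\to\infty$ slowly) to convert the hypothesis ``for every fixed $A$'' into a single uniform statement. As written, the substantive implication \eqref{A-Fbound}$\Rightarrow$\eqref{A-Jbound} and \eqref{B-Fbound}$\Rightarrow$\eqref{B-Jbound} remains unproved.
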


Montgomery's $F(x,T)$ Conjecture \eqref{Fconj} immediately implies \eqref{A-Fbound} so we are using a weaker conjecture on $F(x,T)$ in \Cref{thm5} (A). 
For \Cref{thm5} (B), Montgomery's $F(x,T)$ Conjecture \eqref{Fconj} only implies \eqref{B-Fbound} for $T\le x \le T^A$, and this is a new conjecture for the range $T^A\le x \le T^{\log T}$.
\Cref{thm2} and \Cref{thm5} show that with either assumption, we obtain an improvement on the bound $E(N)\ll N\log^3\!N$ in \eqref{LZ}, which we state as the following corollary.

\begin{corollary} Assume the Riemann Hypothesis. If either \eqref{A-Fbound} or \eqref{A-Jbound} is true, then we have
\begin{equation} \label{PCresult-1} \sum_{n\le N} \psi_2(n) = \frac{ N^2}{2} -2 \sum_{\rho} \frac{N^{\rho+1} }{\rho(\rho +1)}+o( N\log^3\!N),\end{equation} 
while if either \eqref{B-Fbound} or \eqref{B-Jbound} is true, we have
\begin{equation} \label{PCresult-2} \sum_{n\le N} \psi_2(n) = \frac{ N^2}{2} -2 \sum_{\rho} \frac{N^{\rho+1} }{\rho(\rho +1)}+O( N\log^2\!N).\end{equation}
\end{corollary}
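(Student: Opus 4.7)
The plan is to derive this corollary as an essentially immediate chain of implications through the earlier theorems in the paper, with no new analytic work required. The structure is: \Cref{thm5} gives a bound on $\mathcal{E}(N)$, which through \Cref{thm2} bounds $|E(N)|$, and this in turn is fed into the identity in \Cref{thm1}.

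First I would invoke the special case \eqref{specialcase} of \Cref{thm1}, which reads
$$\sum_{n\le N} \psi_2(n) = \frac{N^2}{2} - 2\sum_\rho \frac{N^{\rho+1}}{\rho(\rho+1)} + E(N) + O(N).$$
Here the $O(N)$ term is absorbed into either $o(N\log^3\!N)$ or $O(N\log^2\!N)$, so the entire problem reduces to bounding $|E(N)|$. The inequality $|E(N)|\le \mathcal{E}(N)$ from \Cref{thm2} then reduces the task to bounding $\mathcal{E}(N)$ itself.

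For the first assertion \eqref{PCresult-1}, I would apply \Cref{thm5} (A): assumption \eqref{A-Fbound} implies \eqref{A-Jbound} (by the first half of that theorem), and \eqref{A-Jbound} implies $\mathcal{E}(N)=o(N\log^3\!N)$; substituting into \eqref{specialcase} yields \eqref{PCresult-1}. For \eqref{PCresult-2}, the same chain through \Cref{thm5} (B) gives $\mathcal{E}(N)\ll N\log^2\!N$ under either \eqref{B-Fbound} or \eqref{B-Jbound}, which when substituted proves \eqref{PCresult-2}.

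Since every link in this chain is already established by the preceding theorems, there is no genuine obstacle; the corollary is a bookkeeping consequence. The only thing to verify is that the $O(N)$ term generated by \Cref{thm1} is negligible compared to both claimed error orders, which is immediate since $N \ll N\log^2\!N \ll N\log^3\!N$.
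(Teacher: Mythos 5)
Your proposal is correct and matches the paper's intended argument exactly: the corollary is stated as an immediate consequence of combining \eqref{specialcase} from Theorem \ref{thm1}, the bound $|E(N)|\le\mathcal{E}(N)$ from Theorem \ref{thm2}, and the $\mathcal{E}(N)$ bounds from Theorem \ref{thm5}, with the $O(N)$ term absorbed. No further comment is needed.
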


We will prove a more general form of the implication that a bound on $F(x,T)$ implies a corresponding bound on $J(x,h)$, but \Cref{thm5} contains the most interesting special cases. We make crucial use of the Riemann Hypothesis bound \eqref{SV} for $h$ very close to $x$. The conjectures \eqref{B-Fbound} and \eqref{B-Jbound} are weaker than what the Goldston-Montgomery theorem suggests are true, but they suffice in Theorem \ref{thm5} and the use of stronger bounds does not improve the results on $\mathcal{E}(N)$. The result on the prime number theorem in Theorem \ref{thm5} (A) is due to Heath-Brown \cite[Theorem 1]{Heath-Brown}. The bound in \eqref{B-Fbound} is trivially true by \eqref{Ftrivial} if $x\ge T^{\log T}$.

For our third application, we consider the situation where there can be zeros of the Riemann zeta-function off the half-line, but these zeros satisfy the bound $1/2<\Theta < 1$, where
\[ \Theta := \sup\{ \beta : \zeta(\beta +i \gamma) =0\}. \] 
The following is a special case of a more general result recently obtained in \cite{Bhowmik-H-M-SGoldbach2019}.
\begin{Theorem}[Bhowmik, Halupczok, Matsumoto, Suzuki] \label{thm6} Assuming $1/2<\Theta < 1$. For $N\ge 2$ and $1\le h\le N$, we have
\begin{equation} J(N,h) \ll h N^{2\Theta}\log^4\!N \qquad \text{and}\quad \mathcal{E}(N) \ll N^{2\Theta}\log^5\!N.\end{equation}
\end{Theorem}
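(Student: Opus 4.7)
The plan is to establish a variance estimate $J(N,h) \ll h N^{2\Theta}\log^4 N$ directly from a truncated explicit formula under the hypothesis $\Theta<1$, and then feed this together with the companion bound $H(N) \ll N^{1+2\Theta}\log^4 N$ into \Cref{thm2} to control $\mathcal{E}(N)$.

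For the $J$ bound, I would start from
\[ \psi(t+h) - \psi(t) - h = -\sum_{|\gamma|\le T} \frac{(t+h)^{\rho} - t^{\rho}}{\rho} + O\!\left(\frac{N \log^2 N}{T} + \log N\right) \]
for $0\le t\le N$, and use the two pointwise comparisons $|(t+h)^\rho - t^\rho|/|\rho| \le h(t+h)^{\beta-1}$ and $|(t+h)^\rho - t^\rho|/|\rho| \le 2(t+h)^\beta/|\rho|$, applying the first for $|\gamma|\le 1/h$ and the second for $|\gamma|>1/h$. Squaring and integrating in $t\in[0,N]$, the diagonal contribution in the resulting double sum over zeros is controlled using $\beta\le\Theta$ together with \eqref{N(T+1)-N(T)}, while the off-diagonal contribution is handled via the standard estimate $\bigl|\int_0^N t^{\rho+\overline{\rho'}-1}\,dt\bigr| \ll N^{\beta+\beta'}/|\gamma-\gamma'|$ for $\gamma\ne\gamma'$. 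Choosing $T$ so that the truncation error is absorbed yields the claimed bound on $J(N,h)$; the same technique applied to $\psi(t)-t$ itself gives $H(N)\ll N^{1+2\Theta}\log^4 N$.

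With these two estimates in hand, substitute into \eqref{Wbound}. Setting $h_k := N/2^{k+2}$, and using $\Theta>1/2$, the four summands are bounded respectively by $h_k^{2\Theta-1}\log^4 h_k$, $N^{2\Theta-1}\log^4 N$, $h_k^{-1}N^{2\Theta}\log^4 N$, and $N/h_k^{2}$, and the third dominates, giving $\mathcal{W}(N,h_k)\ll h_k^{-1}N^{2\Theta}\log^4 N$. Plugging into \eqref{E(N)bound} then yields
\[ \mathcal{E}(N) \ll \sum_{0\le k < \log_2\!N} \frac{N}{2^k}\cdot\frac{2^{k+2}}{N}\,N^{2\Theta}\log^4 N \ll N^{2\Theta}\log^5 N, \]
which is the second claim.

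The main obstacle is the bound on $J(N,h)$. The Saffari–Vaughan proof under the Riemann Hypothesis already delicately balances the two pointwise bounds on $(t+h)^\rho - t^\rho$ at the threshold $|\gamma|\sim 1/h$; replacing $\beta=1/2$ by $\beta\le\Theta$ forces one to drag the weight $(t+h)^{\beta-1}$ through the short-zero sum, and one loses a factor of $\log^2$ relative to \eqref{SV} because only the upper bound $\beta\le\Theta$, rather than an equality, is available. Once the $J$ bound is in place, the deduction of $\mathcal{E}(N) \ll N^{2\Theta}\log^5 N$ is essentially bookkeeping via \Cref{thm2}.
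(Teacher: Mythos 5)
Your reduction of $\mathcal{E}(N)\ll N^{2\Theta}\log^5\!N$ from the bounds $J(N,h)\ll hN^{2\Theta}\log^4\!N$ and $H(N)\ll N^{1+2\Theta}\log^4\!N$ via \eqref{Wbound} and \eqref{E(N)bound} is correct and is exactly what the paper does. The difference is in how the variance bound itself is obtained: the paper does not prove it from scratch, but imports the dyadic-block estimate $\int_x^{2x}(\psi(t+h)-\psi(t)-h)^2\,dt\ll hx^{2\Theta}\log^4\!x$ from \cite[Lemma 8]{Bhowmik-H-M-SGoldbach2019}, gets $H(x)\ll x^{2\Theta+1}$ (no logs) from Grosswald's pointwise bound $\psi(x)-x\ll x^{\Theta}$, and then patches: $[0,h]$ is handled by $H(2h)$ and $[h,x]$ by summing the dyadic blocks, where the geometric decay $2^{-2k\Theta}$ makes the dyadic sum converge.

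The gap is in your explicit-formula argument for $J(N,h)$. The two pointwise bounds $h\,t^{\beta-1}$ and $2(t+h)^{\beta}/|\rho|$ for $|(t+h)^{\rho}-t^{\rho}|/|\rho|$ cross at $|\gamma|\asymp t/h$, not at $1/h$; indeed for $h\ge 1$ there are no nontrivial zeros with $|\gamma|\le 1/h$ at all (the lowest zero has $\gamma\approx 14.13$), so your split is vacuous and the entire sum is estimated by $2(t+h)^{\beta}/|\rho|$. The diagonal contribution is then already $\gg\sum_{|\gamma-\gamma'|\le 1}N^{2\Theta+1}/(|\gamma||\gamma'|)\asymp N^{1+2\Theta}$, which exceeds the target $hN^{2\Theta}\log^4\!N$ by a factor of about $N/h$ when $h$ is small. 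Because the correct threshold depends on $t$, one cannot integrate over all of $[0,N]$ with a single fixed split; this is precisely why Saffari--Vaughan and \cite{Bhowmik-H-M-SGoldbach2019} prove the estimate on a block $t\in[x,2x]$ (threshold $|\gamma|\asymp x/h$) and then sum dyadically --- which is also the structure of the paper's proof. Two smaller issues in the same step: the integration should not start at $t=0$ (the truncated explicit formula requires $t\ge 2$, and since off-line zeros come in pairs $\beta,1-\beta$ the off-diagonal integrals $\int_0 t^{\beta+\beta'-2}\,dt$ can diverge at the origin --- the paper's $J_1(h)=\int_0^h$, controlled by $H(2h)$, is how this range is handled); and the first pointwise bound should read $h\,t^{\beta-1}$ rather than $h(t+h)^{\beta-1}$, since $u\mapsto u^{\beta-1}$ is decreasing. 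With a dyadic decomposition in $t$ and the threshold $x/h$ your outline becomes the standard (and correct) proof of \eqref{BHMS}; as written, the key estimate does not follow.
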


Weaker results of this type were first obtained by Granville \cite{Gran1,Gran2}. To prove this theorem we only need to adjust a few details of the proof in \cite{Bhowmik-H-M-SGoldbach2019} to match our earlier theorems. With more work the power of $\log N$ can be improved but that makes no difference in applications. We will not deal with the situation when $\Theta =1$, which depends on the width of the zero-free region to the left of the line $\sigma =1$ and the unconditional error in the prime number theorem. The converse question of using a bound for $E(N)$ to obtain a zero-free region has been solved in an interesting recent paper of Bhowmik and Ruzsa \cite{BhowRuzsa2018}. As a corollary, we see that the terms in Fujii's formula down to size $\beta \ge 1/2$ are main terms assuming the conjecture $\Theta < 3/4$ instead of the Riemann Hypothesis.
\begin{corollary} \label{cor_BHMS} Suppose $1/2 \le \Theta < 3/4$. Then 
\begin{equation} \label{noRH} \sum_{n\le N} \psi_2(n) = \frac{ N^2}{2} -2 \sum_{\substack{\rho\\ \beta\ge 1/2}} \frac{N^{\rho+1}}{\rho(\rho+1)}+o( N^{3/2}).\end{equation} 
\end{corollary}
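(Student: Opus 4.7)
The plan is to begin from the explicit form \eqref{specialcase} of \Cref{thm1},
$$
\sum_{n\le N}\psi_2(n) = \frac{N^2}{2} - 2\sum_\rho \frac{N^{\rho+1}}{\rho(\rho+1)} + E(N) + O(N),
$$
and split the sum over nontrivial zeros into the pieces with $\beta\ge 1/2$ (which form the main term claimed in \eqref{noRH}) and $\beta<1/2$ (which must be absorbed into the $o(N^{3/2})$ error). The task then reduces to showing that $E(N)$, $O(N)$, and the low-zero sum $\sum_{\rho:\,\beta<1/2} N^{\rho+1}/(\rho(\rho+1))$ are each $o(N^{3/2})$.

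First I would dispose of $E(N)$. When $1/2<\Theta<3/4$, \Cref{thm6} gives $|E(N)|\le \mathcal{E}(N)\ll N^{2\Theta}\log^5\!N$; since $2\Theta<3/2$ this is $o(N^{3/2})$. In the boundary case $\Theta=1/2$ (the Riemann Hypothesis itself), \Cref{thm4} supplies the stronger bound $\mathcal{E}(N)\ll N\log^3\!N$. The $O(N)$ term is trivially $o(N^{3/2})$.

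Next I would handle the zero sum with $\beta<1/2$. Pulling out a factor of $N^{3/2}$, one has
$$
\left|\sum_{\substack{\rho\\ \beta<1/2}} \frac{N^{\rho+1}}{\rho(\rho+1)}\right| \le N^{3/2} \sum_{\substack{\rho\\ \beta<1/2}} \frac{N^{\beta-1/2}}{|\rho(\rho+1)|}.
$$
From \eqref{N(T+1)-N(T)} we have $\sum_\rho |\rho(\rho+1)|^{-1}<\infty$, which dominates the non-negative summand $N^{\beta-1/2}/|\rho(\rho+1)|$ uniformly in $N\ge 1$; meanwhile, for each individual zero with $\beta<1/2$, $N^{\beta-1/2}\to 0$ as $N\to\infty$. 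Dominated convergence therefore forces the displayed sum to be $o(1)$, so the low-zero contribution is $o(N^{3/2})$. Combining the three estimates yields \eqref{noRH}.

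The only subtle step is the dominated-convergence argument: zeros with $\beta<1/2$ correspond via the functional equation $\rho\mapsto 1-\rho$ to zeros with $\beta>1/2$, and a priori nothing prevents $\beta$ from accumulating at $1/2$ from below, so a crude uniform bound like $N^{\beta-1/2}\le N^{-\delta}$ for some fixed $\delta>0$ is unavailable. What saves us is that the comparison sequence $|\rho(\rho+1)|^{-1}$ is $N$-independent and summable, which is precisely the hypothesis needed for dominated convergence. Everything else is a direct quotation of \Cref{thm1}, \Cref{thm4}, and \Cref{thm6}.
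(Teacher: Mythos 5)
Your proposal is correct and follows the same route as the paper: start from \eqref{specialcase}, bound $E(N)$ via \Cref{thm6} (or \Cref{thm4} when $\Theta=1/2$), and absorb the zeros with $\beta<1/2$ into the error using the absolute convergence of $\sum_\rho |\rho(\rho+1)|^{-1}$. Your dominated-convergence argument is simply the careful spelling-out of the paper's one-line remark that absolute convergence of the zero sum makes the $\beta<1/2$ contribution $o(N^{3/2})$, and you are right that this care is needed since $\beta$ could a priori accumulate at $1/2$ from below.
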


The term $E(N)$ in Fujii's formula will give main terms $\gg N^{3/2}$ from zeros with $\beta \ge 3/4$. For weighted versions of Fujii's theorem there are formulas where the error term corresponding to $E(N)$ is explicitly given in terms of sums over zeros, see \cite{Lang-Zac2015} and \cite{BKP}. In principle one could use an explicit formula for $\Psi(z)$ in Theorem \ref{thm1} to obtain a complicated formula for $E(N)$ in terms of zeros, but we have not pursued this.


We conclude with a slight variation on Fujii's formula. We have been counting Goldbach representations using the von Mangoldt function $\Lambda(n)$, so that we include prime powers and also representations for odd integers. Doing this leads to nice formulas such as Fujii's formula because of the weighting by $\log p$ and also that complicated lower order terms coming from prime powers have all been combined into the sum over $\rho$ term in Fujii's formula. The reason for this can be seen in Landau's formula \cite{Landau1912}, which states that for fixed $x$ and $T\to \infty$, 
\begin{equation} \sum_{0<\gamma \le T} x^{\rho} =- \frac{T\Lambda(x)}{2\pi} + O(\log T),\end{equation}
where we define $\Lambda(x)$ to be zero for real non-integer $x$. 
In the following easily proven theorem we remove the Goldbach representations counted by the von Mangoldt function for odd numbers. 
\begin{Theorem} \label{thm7} We have 
\begin{equation} \label{odd}\sum_{\substack{n\le N \\ n \ {\rm odd}}} \psi_2(n) = 2 N \log N +O(N),\end{equation}
and therefore by \eqref{specialcase}
\begin{equation} \label{even}\sum_{\substack{n\le N \\ n \ {\rm even}}} \psi_2(n) = \frac{N^2}{2} - 2\sum_\rho \frac{N^{\rho +1}}{\rho(\rho +1)}
 - 2 N \log N + E(N) +O(N).\end{equation}
\end{Theorem}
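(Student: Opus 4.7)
\textbf{Proof proposal for Theorem~\ref{thm7}.}
The plan is to prove \eqref{odd} directly from the definition of $\psi_2$, and then obtain \eqref{even} by subtracting \eqref{odd} from \eqref{specialcase}. The key observation is that if $n$ is odd and $m+m'=n$ with $m,m'\ge 1$, then exactly one of $m,m'$ is even; since $\Lambda$ is supported on prime powers, the even factor must be $2^k$ for some $k\ge 1$, contributing $\Lambda(2^k)=\log 2$. Since $m\ne m'$ whenever $n$ is odd, the symmetry $m\leftrightarrow m'$ gives
\[
\psi_2(n) \;=\; 2\log 2 \sum_{\substack{k\ge 1 \\ 2^k<n}} \Lambda(n-2^k) \qquad (n\ \text{odd}).
\]

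Summing over odd $n\le N$, swapping the order of summation, and substituting $m=n-2^k$ (which sends odd $n$ to odd $m$, as $2^k$ is even for $k\ge 1$) yields
\[
\sum_{\substack{n\le N\\ n\ \text{odd}}}\psi_2(n) \;=\; 2\log 2 \sum_{\substack{k\ge 1\\ 2^k<N}}\ \sum_{\substack{m\le N-2^k\\ m\ \text{odd}}}\Lambda(m).
\]
Since the only even integers in the support of $\Lambda$ are powers of $2$, the inner sum equals $\psi(N-2^k)-\log 2\,\lfloor\log_2(N-2^k)\rfloor = \psi(N-2^k)+O(\log N)$. Then I apply the prime number theorem in the quantitative form $\psi(x)=x+O(x/\log x)$. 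Replacing $\psi(N-2^k)$ by $N-2^k$ produces the main term
\[
2\log 2\sum_{1\le k\le\log_2\!N}(N-2^k) \;=\; 2\log 2\bigl(N\log_2\!N + O(N)\bigr) \;=\; 2N\log N + O(N),
\]
using $\sum_{k\le \log_2 N}2^k=O(N)$ and $K\log 2=\log N+O(1)$ where $K=\lfloor\log_2(N-1)\rfloor$.

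For the error terms, I split the sum over $k$ into two ranges. When $2^k\le N/2$ (at most $O(\log N)$ values of $k$), we have $N-2^k\ge N/2$, so the PNT error per term is $\ll N/\log N$ and their sum is $O(N)$. For the at most one value of $k$ with $N/2<2^k<N$, I use the Chebyshev bound $|\psi(N-2^k)-(N-2^k)|\ll N-2^k\le N/2$. The parity-adjustment error of size $O(\log N)$ per $k$ sums to $O(\log^2\!N)$. Altogether this gives \eqref{odd}, and then \eqref{even} is immediate from \eqref{specialcase}. There is no substantive obstacle; the only point requiring care is the single value of $k$ near $\log_2\!N$, where PNT gives only a trivial bound but the window $N-2^k$ is itself small enough to keep the contribution within $O(N)$.
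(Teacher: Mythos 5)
Your proposal is correct and follows essentially the same route as the paper: write $\psi_2(n)=2\log 2\sum_{k}\Lambda(n-2^k)$ for odd $n$, swap the order of summation, remove the parity restriction at a cost of $O(\log^2\!N)$, and apply the prime number theorem with a modest error term to get $2N\log N+O(N)$. Your extra care with the single value of $k$ for which $N-2^k$ is small is a sensible refinement (the paper glosses over it, though its $O(N/\log N)$ per-term bound still holds there trivially), and the deduction of \eqref{even} from \eqref{specialcase} is immediate in both treatments.
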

The interesting aspect of \eqref{even} is that a new main term $-2N\log N$ has been introduced into Fujii's formula, and this term comes from not allowing representations where the von Mangoldt function is evaluated at the prime 2 and its powers. If we denote the error term $E_{\rm even}(N) := - 2N\log N+E(N)$ in \eqref{even}, then we see that at least one or both of $E(N)$ or $E_{\rm even}(N)$ is $\Omega(N\log N)$. The simplest answer for which possibility occurs would be that the error term in Fujii's formula is smaller than any term generated by altering the support of the von Mangoldt function, in which case $E_{\rm even}(N) = - 2N\log N +o(N\log N)$ and $E(N) = o(N\log N)$. Whether this is true or not seems to be a very difficult question. 

\section{Proofs of Theorems \ref{thm1}, \ref{thm2}, \ref{thm3}, and \ref{thm4}}

\begin{proof}[Proof of Theorem \ref{thm1}]
We first obtain a weighted version of Fujii's theorem.
By \eqref{PNT} we see $\Lambda(n)$ is on average 1 over long intervals, 
and therefore as a first order average approximation to $\Psi(z)$ we use
\begin{equation} I(z) := \sum_{n} z^n = \frac{z}{1-z} \end{equation}
for $|z|<1$. Observe that, on letting $n=m+m'$ in the calculations below, 
\[ I(z) \sum_{m} a_m z^m = \sum_{m,m'} a_m z^{m+m'} =\sum_{n\ge 2} \left(\sum_{ m\le n-1}a_m\right) z^n, \]
and therefore
\[ I(z)^2 = I(z) I(z) 
= \sum_{n\ge 2} \left(\sum_{ m\le n-1}1\right) z^n = \sum_{n} (n-1)z^n ,\]
and
\[ I(z)\Psi(z) =\sum_{n\ge 2}\left(\sum_{ m\le n-1}\Lambda(m)\right) z^n = \sum_n\psi(n-1)z^n.\]
Thus
\[ (\Psi(z)-I(z))^2 = \Psi(z)^2 - 2\Psi(z)I(z) + I(z)^2 = \sum_n \left( \psi_2(n) -2\psi(n-1) + (n-1)\right)z^n ,\]
and we conclude that
\begin{equation} \label{WeightedFujii} \sum_n \psi_2(n) z^n = \sum_n \left(2\psi(n-1) -(n-1)\right)z^n +(\Psi(z)-I(z))^2 ,
\end{equation}
which is a weighted version of \Cref{thm1}.

To remove the weighting, we take $ z=re(\alpha)$ with $0 < r < 1$, and recalling \eqref{I_N} 
we have
\begin{equation} \label{truncate} \int_0^1 (\sum_{n} a_n r^ne(\alpha n)) I_N(1/r,-\alpha) \, d\alpha = \sum_{n}\sum_{n'\le N} a_n r^{n - n'}\int_0^1 e((n-n')\alpha)\, d\alpha = \sum_{n\le N} a_n. \end{equation}
Thus, using \eqref{WeightedFujii} with $z=re(\alpha)$ and $0 < r < 1$ in \eqref{truncate}, we obtain
\begin{align*}
\sum_{n\le N} \psi_2(n) &= \sum_{n\le N}(2\psi(n-1)-(n-1)) + \int_0^1(\Psi(r,\alpha)-I(r,\alpha))^2 I_N(1/r,-\alpha)\, d\alpha \\
&= \sum_{n\le N}(2\psi(n-1)-(n-1)) + E(N).
\end{align*}
Utilizing \cite[Chapter 2, (13)]{Ingham1932}
$\psi_1(x) := \int_0^x \psi(t)\, dt$ and $\psi_1(N) = \sum_{n\le N}\psi(n-1)$,
we have
\begin{equation} \label{M(N)term} \sum_{n\le N} \psi_2(n) = 2\psi_1(N) - \frac12 (N-1)N + E(N). \end{equation}
To complete the proof, we use the explicit formula, for $x\ge 1$,
\begin{equation}\label{Psi1} \psi_1(x) = \frac{x^2}{2} -\sum_\rho \frac{x^{\rho +1}}{\rho(\rho +1)} - (\log 2\pi)x +\frac{\zeta '}{\zeta}(-1) - \sum_k \frac{x^{1-2k}}{2k(2k-1)}, \end{equation}
see \cite[Theorem 28]{Ingham1932} or \cite[12.1.1 Exercise 6]{MontgomeryVaughan2007}.
Substituting \eqref{Psi1} into \eqref{M(N)term}, we obtain \Cref{thm1}.
\end{proof}

\begin{proof}[Proof of Theorem \ref{thm2}]
Letting
\begin{equation} \label{LambdaNot} \Lambda_0(n) := \Lambda(n) - \1_{n\ge 1}, \end{equation}
we have
\begin{align*} \mathcal{E}(N) &:= \int_0^1 |\Psi(r,\alpha)-I(r,\alpha)|^2 |I_N(1/r,-\alpha)|\, d\alpha \\
&= 2\int_0^{1/2} \left| \sum_n \Lambda_0(n)r^n e(n\alpha) \right|^2 |I_N(1/r,-\alpha)| \,d\alpha. \end{align*}
We will now choose $r$ in terms of $N$ by setting
\begin{equation} \label{rN} r=e^{-1/N}, \end{equation}
and with this choice it is easy to see that for $|\alpha| \le 1/2$ we have
\begin{equation} \label{INbound} |I_N(1/r,-\alpha)|
= \left| \frac{e-e(\alpha N)}{r - e(\alpha)}\right| 
\ll \min(N, \frac1{|\alpha|}).\end{equation}
Therefore we obtain
\begin{equation} \label{ENbound} \mathcal{E}(N) \ll \int_0^{1/2} \left| \sum_n \Lambda_0(n)r^n e(n\alpha) \right|^2 \min(N, \frac1{\alpha}) \, d\alpha. \end{equation}

Letting 
\[ h_N(\alpha) := N \1_{0\le \alpha \le \frac2N } +\sum_{k<\log_2\!N} \frac{N}{2^k} \1_{\frac{2^k}N < \alpha \le \frac{2^{k+1}}{N}}, \]
 then $ \frac12 h_N(\alpha) \le \min(N, \frac1{\alpha}) \le h_N(\alpha)$ in the range $0\le \alpha \le 1/2$.
Now if we put
\[ H_N(\alpha) := \sum_{0\le k<\log_2\!N} \frac{N}{2^k} \1_{0\le \alpha \le \frac{2^{k+1}}{N }}, \]
then $h_N(\alpha)\le H_N(\alpha) \le 2 h_N(\alpha)$ and therefore $ \frac14 H_N(\alpha) \le \min(N, \frac1{\alpha}) \le H_N(\alpha) $
in the range $0\le \alpha \le 1/2$. We conclude 
\[\min(N, \frac1{\alpha}) \asymp h_N(\alpha) \asymp H_N(\alpha), \qquad \text{for} \quad 0\le \alpha \le 1/2. \]
Thus
\begin{equation} \label{Breakup} \mathcal{E}(N) \ll \sum_{0\le k< \log_2\!N} \frac{N}{2^k} \int_0^{2^{k+1}/N} \left| \sum_n \Lambda_0(n) r^n e(n\alpha) \right|^2 \, d\alpha = \sum_{0\le k< \log_2\!N} \frac{N}{2^k} \mathcal{W}(\frac{N}{2^{k+2}}).
\end{equation}

Gallagher's lemma gives the bound (see \cite[Lemma 1.9]{Montgomery71} or \cite[Section 4]{Gold-Yang})
 \[ \int_0^{1/2h}\left|\sum_n a_n e(n\alpha) \right|^2 \, d\alpha \ll \frac1{h^2} \int_{-\infty}^\infty \left| \sum_{ x<n\le x+h} a_n \right|^2 \, dx\]
provided $\sum_n|a_n| <\infty$, 
and therefore
we have 
\begin{align*} \mathcal{W}(N,h) &= \int_0^{1/2h} \left| \sum_n \Lambda_0(n)r^n e(n\alpha) \right|^2 \, d\alpha \\
&\ll \frac1{h^2} \int_{-\infty}^\infty \left| \sum_{ x<n\le x+h} \Lambda_0(n)r^n \right|^2 \, dx \\
&= \frac1{h^2} \int_{-h}^0 \left| \sum_{ n\le x+h} \Lambda_0(n)r^n \right|^2 \, dx + \frac1{h^2} \int_{0}^\infty \left| \sum_{ x<n\le x+h} \Lambda_0(n)r^n \right|^2 \, dx .
\end{align*}
We conclude
\begin{equation} \label{W(h)} \mathcal{W}(N,h) \ll \frac1{h^2} \left( I_1(N,h) +I_2(N,h)\right), \end{equation}
where
\begin{equation} \label{I1 and I2} I_1(N,h) := \int_0^h \left| \sum_{ n\le x} \Lambda_0(n)e^{-n/N} \right|^2 \, dx, \qquad I_2(N,h) := \int_0^\infty \left|\sum_{x< n\le x+h} \Lambda_0(n)e^{-n/N} \right|^2 \, dx . \end{equation}

To bound $I_1(N,h)$ and $I_2(N,h)$ we will use partial summation on the integrands with the counting function
\begin{equation} \label{R(u)def}
R(u) := \sum_{n\le u}\Lambda_0(n) = \psi(u) - \lfloor u \rfloor .\end{equation}
Recalling $H(x)$ and $J(x,h)$ from \eqref{HandJ}, and making use here and later of the inequality $(a+b)^2\le 2(a^2+b^2)$, we have, for $x\ge1$,
\begin{equation} \begin{split} \label{R-Hbound}
\int_0^x R(t)^2\, dt &= \int_0^x \left( \psi(t) - \lfloor t \rfloor \right)^2\, dt
= \int_0^x \left( \psi(t)-t + t - \lfloor t \rfloor \right)^2\, dt \\
&\ll \int_0^x \left( \psi(t)-t \right)^2\, dt + \int_0^x \left( t - \lfloor t \rfloor \right)^2\, dt \\
&\ll H(x) + x, \end{split} \end{equation}
and 
\begin{equation} \begin{split} \label{R-Jbound} \int_0^x \left(R(t+h)-R(t)\right)^2\, dt
&\ll J(x,h) + \int_0^x (\lfloor t+h\rfloor - \lfloor t\rfloor -h)^2 dt \\ &
\ll J(x,h) + x.
\end{split}\end{equation}
Next, by partial summation, 
\begin{equation} \label{weightsum} \sum_{n\le x}\Lambda_0(n) e^{-n/N} = \int_0^x e^{-u/N} dR(u) 
= R(x)e^{-x/N} + \frac1N\int_0^xR(u) e^{-u/N} \, du,
\end{equation}
and therefore, for $1\le h \le N$, we have, using Cauchy-Schwarz and \eqref{R-Hbound},
\begin{equation}\label{I_1bound} \begin{split} I_1(N,h) & \ll \int_0^h\left( R(x)^2e^{-2x/N} + \frac1{N^2}\left(\int_0^x|R(u)| e^{-u/N} \, du\right)^2 \right)\, dx \\& \ll \int_0^h\left( R(x)^2e^{-2x/N} + \frac1{N^2}\left(\int_0^xR(u)^2 e^{-u/N} \, du\right) \left(\int_0^x e^{-u/N}\,du\right)\right)\, dx \\&
\ll \left(1+ \frac{h^2}{N^2}\right) \int_0^h R(x)^2\, dx \ll \int_0^h R(x)^2\, dx\ll H(h) + h. \end{split} \end{equation}

For $I_2(N,h)$, we replace $x$ by $x+h$ in \eqref{weightsum} and take their difference to obtain
\begin{equation}\begin{split}\label{differencesum} \sum_{x<n\le x+h}\Lambda_0(n) e^{-n/N} &= 
R(x+h)e^{-(x+h)/N} - R(x)e^{-x/N} + \frac1N\int_x^{x+h}R(u) e^{-u/N} \, du \\
&= \left(R(x+h)-R(x)\right) e^{-x/N} + O\left( \frac{h}{N}|R(x+h)|e^{-x/N} \right) \\
&\qquad+O\left(\frac1N\int_x^{x+h}|R(u)|e^{-u/N}\, du\right).
\end{split} \end{equation}
Thus we have
\begin{align*} I_2(N,h) &\ll \int_0^\infty (R(x+h)-R(x))^2 e^{-2x/N}\, dx + \frac{h^2}{N^2} \int_0^\infty R(x+h)^2 e^{-2x/N}\, dx \\
&\qquad+ \frac1{N^2} \int_0^\infty \left(\int_x^{x+h} |R(u)|e^{-u/N}\, du \right)^2\, dx. \end{align*}
Applying the Cauchy-Schwarz inequality to the double integral above and changing the order of integration, we bound this term by
\[ \le \frac{h}{N^2} \int_0^\infty \int_x^{x+h} R(u)^2e^{-2u/N}\, du \, dx = \frac{h^2}{N^2} \int_0^\infty R(u)^2e^{-2u/N}\, du \]
and therefore, for $1\le h \le N$,
\[ I_2(N,h) \ll \frac{h^2}{N^2} \int_0^\infty R(x)^2 e^{-2x/N}\, dx
+\int_0^\infty (R(x+h)-R(x))^2 e^{-2x/N}\, dx.\]
Now for any integrable function $f(x)\ge 0$ we have
\begin{align*} \int_0^\infty f(x)e^{-2x/N}\, dx &\le \int_0^N f(x)\, dx + \sum_{j=1}^\infty e^{-2j} \int_{jN}^{(j+1)N} f(x)\, dx \\
&\le \sum_{j=1}^\infty \frac{1}{2^{j-1}} \int_0^{jN} f(x)\, dx, \end{align*} 
and therefore 

\begin{equation}\begin{split} \label{I_2bound} I_2(N,h) &\ll \sum_{j=1}^\infty\frac{1}{2^{j}} \int_0^{jN}\left( \frac{h^2}{N^2} R(x)^2+ (R(x+h)-R(x))^2 \right)\, dx \\& \ll \sum_{j=1}^\infty\frac{1}{2^{j}}\left( \frac{h^2}{N^2}H(jN)+ \frac{jh^2}{N} +J(jN,h) + jN \right) \\& 
\ll \sum_{j=1}^\infty\frac{1}{2^{j}}\left(\frac{h^2}{N^2}H(jN)+ J(jN,h) \right) +N
 .\end{split}\end{equation}
 Thus by \eqref{W(h)}, \eqref{I_1bound} and \eqref{I_2bound}, we conclude, for $1\le h\le N$

\begin{equation}
\mathcal{W}(N,h) \ll \frac1{h^2} H(h) + \frac{1}{N^2}\sum_{j=1}^\infty \frac{1}{2^{j}}H(jN) + \frac1{h^2}\sum_{j=1}^\infty \frac{1}{2^{j}}J(jN,h) + \frac{N}{h^2}. 
\end{equation}
\end{proof}

\begin{proof}[Proof of Theorem \ref{thm3}] From \eqref{LambdaNot} we have 
$ \Psi(z) - I(z) = \sum_n \Lambda_0(n)z^n$,
and \eqref{PNTerror} follows from \eqref{truncate}. To obtain \eqref{PNTbound}, applying the Cauchy-Schwarz inequality to \eqref{PNTerror} and using \eqref{INbound} we have
\[ \psi(N) - N \ll \sqrt{\mathcal{E}(N) \int_0^{1/2}\min(N,\frac1{\alpha})\, d\alpha } \ll \sqrt{\mathcal{E}(N) \log N}.\]
\end{proof}

\begin{proof}[Proof of Theorem \ref{thm4}] Assuming the Riemann Hypothesis, from \eqref{Cramer} and \eqref{SV} we have $H(x)\ll x^2$ and $J(x,h) \ll hx\log^2\!x$ for $1\le h \le x$, and therefore by \eqref{Wbound} we have $\mathcal{W}(N,h)\ll \frac{N}{h}\log^2\!N $ for $1\le h\le N$, and therefore $\mathcal{E}(N) \ll N\log^3\!N$, which is the first bound in Theorem \ref{thm4}, and Theorem \ref{thm3} gives the second bound.
\end{proof}

\section{Theorem \ref{thm8} and proof of Theorem \ref{thm5}}

We will prove the following more general form of Theorem \ref{thm5}. In addition to $J(x,h)$ from \eqref{HandJ}, we also make use of the related variance 
\begin{equation} \label{calJ} \mathcal{J}(x,\delta)
 := \int_0^{x} \left( \psi((1+\delta)t) - \psi(t) - \delta t \right)^2\, dt, \qquad \text{ for} \quad 0<\delta \le 1. \end{equation}
The variables $h$ in $J(x,h)$ and $\delta$ in $\mathcal{J}(x,\delta)$ are roughly related by $h\asymp \delta x$. Saffari and Vaughan \cite{SaffariVaughan} proved in addition to \eqref{SV} that assuming the Riemann Hypothesis, we have, for $0< \delta \le 1$,
\begin{equation} \label{SV2} \mathcal{J}(x,\delta) \ll \delta x^2 \log^2(2/\delta). \end{equation}
\begin{Theorem} \label{thm8} Assume the Riemann Hypothesis. Let $x\ge 2$, and $\mathcal{L}(x)$ be a continuous increasing function satisfying 
\begin{equation} \label{calL-bound} \log x \le \mathcal{L}(x) \le \log^2\!x. \end{equation} Then the assertion
\begin{equation} \label{Fthm8} F(x,T) \ll T\mathcal{L}(x) \qquad \text{ uniformly for } \quad e^{\sqrt{\mathcal{L}(x)}} \le T \le x , \end{equation}
implies the assertion
\begin{equation} \label{calJthm8} \mathcal{J}(x,\delta) \ll \delta x^2\mathcal{L}(x) \qquad \text{ uniformly for } \quad 1/x \le \delta \le 2e^{-\sqrt{\mathcal{L}(x)}} , \end{equation}
which implies the assertion
\begin{equation} \label{Jthm8} J(x,h) \ll hx\mathcal{L}(x) \qquad \text{ uniformly for } \quad 1 \le h \le 2xe^{-\sqrt{\mathcal{L}(x)}}, \end{equation} 
which implies $\mathcal{E}(N)\ll N\mathcal{L}(N) \log N$ and $\psi(N) = N +O(N^{1/2}\sqrt{\mathcal{L(N)}}\log N)$.
\end{Theorem}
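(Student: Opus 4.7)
The plan is to establish the chain of implications $\eqref{Fthm8} \Rightarrow \eqref{calJthm8} \Rightarrow \eqref{Jthm8} \Rightarrow$ (bounds on $\mathcal{E}(N)$ and $\psi(N)$) in the stated order. The first link is the conditional analogue of the Goldston--Montgomery theorem and contains the substantive analytic work; the remaining links are dyadic decompositions and direct substitution into Theorems~\ref{thm2} and~\ref{thm3}.

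For $\eqref{Fthm8} \Rightarrow \eqref{calJthm8}$, I would adapt the method of \cite{GoldMont}. Starting from the RH-truncated explicit formula
\[
\psi((1+\delta)t) - \psi(t) - \delta t = -\sum_{|\gamma|\le T}\frac{t^{\rho}\bigl((1+\delta)^\rho - 1\bigr)}{\rho} + O\!\left(\frac{t\log^2(xT)}{T}\right),
\]
with $\rho = 1/2+i\gamma$, I choose $T \asymp 1/\delta$, so that the interval $\delta \in [1/x, 2e^{-\sqrt{\mathcal{L}(x)}}]$ maps to $T \in [e^{\sqrt{\mathcal{L}(x)}}, x]$, exactly the range where \eqref{Fthm8} is available. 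Squaring, integrating $t$ over $[0,x]$, and substituting $t = e^u$ reduces $\mathcal{J}(x,\delta)$ to
\[
\int_{-\infty}^{\log x} e^{2u}\Bigl|\sum_{|\gamma|\le T} a_\gamma(\delta)\, e^{i\gamma u}\Bigr|^2 du, \qquad a_\gamma(\delta) := \frac{(1+\delta)^\rho - 1}{\rho},
\]
where $|a_\gamma(\delta)|^2 \asymp \min(\delta^2, 1/(1+\gamma^2))$ is concentrated on $|\gamma| \ll 1/\delta$. Expanding the square and performing the $u$-integration introduces the Montgomery kernel $w(\gamma-\gamma')$ up to a smooth cutoff, so after a dyadic split in $\max(|\gamma|,|\gamma'|)$ the double sum is majorised by a Stieltjes integral against $F(\cdot,T)$. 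Invoking \eqref{Fthm8} on the main dyadic ranges and \eqref{Ftrivial} at the low-frequency tail $|\gamma|\le e^{\sqrt{\mathcal{L}(x)}}$ yields $\mathcal{J}(x,\delta) \ll \delta x^2\mathcal{L}(x)$, with the truncation error absorbed by $\mathcal{L}(x)\ge \log x$.

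For $\eqref{calJthm8}\Rightarrow \eqref{Jthm8}$, I would decompose $[0,x]$ dyadically. On $[0,h]$, Chebyshev gives $|\psi(t+h)-\psi(t)-h|\ll h$, contributing $O(h^3)\ll hx\mathcal{L}(x)$. On $I_k = (2^k h, 2^{k+1}h]$ with $2^k h\le x$, writing $t+h = t(1+h/t)$ with $h/t\in[2^{-k-1}, 2^{-k}]$ and setting $\delta_k = 2^{-k}$, an elementary majorisation using $(a+b)^2\le 2a^2+2b^2$ together with $|\psi(u+v)-\psi(u)|\ll v$ replaces the increment $h$ by $\delta_k t$ modulo a harmless error. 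The contribution from $I_k$ is then controlled by $\mathcal{J}(2^{k+1}h,\delta_k)$; invoking \eqref{calJthm8} (or \eqref{SV2} on the few ranges where $\delta_k$ falls outside the allowed interval) yields a contribution $\ll hx\mathcal{L}(x)/2^k$, and summation gives \eqref{Jthm8}.

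For the final implication, I substitute into \eqref{Wbound}. The Cram\'er bound \eqref{Cramer} gives $H(h)/h^2 \ll 1$ and $(1/N^2)\sum_j 2^{-j}H(jN)\ll 1$. For the $J$-term, when $h\le 2jNe^{-\sqrt{\mathcal{L}(jN)}}$ I use \eqref{Jthm8}, otherwise I use \eqref{SV} (noting $\log(2jN/h)\ll \sqrt{\mathcal{L}(N)}$); in either case the $j$-sum contributes $\ll (N/h)\mathcal{L}(N)$. Hence $\mathcal{W}(N,h)\ll (N/h)\mathcal{L}(N) + 1$, and
\[
\mathcal{E}(N)\ll \sum_{0\le k<\log_2\! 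N}\frac{N}{2^k}\bigl(2^{k+2}\mathcal{L}(N)+1\bigr)\ll N\mathcal{L}(N)\log N.
\]
The estimate $\psi(N)-N\ll N^{1/2}\sqrt{\mathcal{L}(N)}\log N$ then follows from \eqref{PNTbound}. I expect the main obstacle to lie in the first step: calibrating the choice $T\asymp 1/\delta$ and the dyadic partition in $|\gamma|$ so that \eqref{Fthm8} is invoked only on its stated range, while the low-frequency and truncation contributions are absorbed without introducing extraneous logarithmic factors that would degrade the target bound $\delta x^2 \mathcal{L}(x)$.
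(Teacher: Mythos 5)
Your architecture matches the paper's (a Goldston--Montgomery-type argument for \eqref{Fthm8}$\Rightarrow$\eqref{calJthm8}, a dyadic descent for \eqref{calJthm8}$\Rightarrow$\eqref{Jthm8}, and substitution into \eqref{Wbound} and \eqref{PNTbound} at the end --- your last step is correct and is essentially what the paper does). But each of the first two links contains a step that fails quantitatively. In the first link, you truncate the explicit formula at $T\asymp 1/\delta$. The truncation error is $O(t\log^2(xT)/T)=O(\delta t\log^2\!x)$ plus the $\min(1,t/(T\lVert t\rVert))$ terms, and the square of the first piece integrates over $[0,x]$ to $\asymp \delta^2x^3\log^4\!x$. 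Against the target $\delta x^2\mathcal{L}(x)\le \delta x^2\log^2\!x$ this requires $\delta x\log^2\!x\ll 1$, which fails for every $\delta\ge 1/x$. The paper instead truncates at $Z=x\log^3\!x$, so the explicit-formula error really is negligible, and lets the coefficients $a(\rho)=((1+\delta)^\rho-1)/\rho$ perform the localization to $|\gamma|\lesssim 1/\delta$; the passage between the coefficient-weighted zero sum and $F(x,\cdot)$ is then done via \cite[Lemma 10]{GoldMont} together with the nonnegative integral representation \eqref{F-integral}, decomposing the weight $\min(\delta^2,1/t^2)$ dyadically in $t$. Your alternative of expanding the square and ``majorising the double sum by a Stieltjes integral against $F$'' also hides a positivity problem: the terms $a_\gamma\overline{a_{\gamma'}}x^{i(\gamma-\gamma')}w(\gamma-\gamma')$ have no fixed sign, so dyadic pieces of the double sum are not directly dominated by $F$; the sum must be kept inside a $t$-integral whose integrand is a nonnegative weight times $\bigl|\sum_\gamma x^{i\gamma}/(1+(t-\gamma)^2)\bigr|^2$. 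Note also that the heights $2^k/\delta$ that arise exceed $x$, so \eqref{Fthm8} must first be extended to $T\le x^A$ via Montgomery's unconditional asymptotic \eqref{MonThm} (the paper's \Cref{lem1}).

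In the second link, the pointwise replacement of the increment $h$ by $\delta_k t$ on $I_k=(2^kh,2^{k+1}h]$ is not harmless. The discrepancy is $\psi(t+\delta_kt)-\psi(t+h)-(\delta_kt-h)$ with $0\le\delta_kt-h\le h$; even granting $|\psi(u+v)-\psi(u)|\ll v$ (itself false for small $v$, where a single prime contributes $\log u$), squaring and integrating contributes $\gg h^2x$ in total, which exceeds $hx\mathcal{L}(x)$ as soon as $h\gg\log^2\!x$. This is exactly why the paper invokes the Saffari--Vaughan identity (\Cref{lem3}), which bounds $\int_{x/2}^x(f(t+h)-f(t))^2\,dt$ by an \emph{average} of $\int_0^x(f(y+\delta y)-f(y))^2\,dy$ over $\delta\in[0,8h/x]$, avoiding any pointwise comparison; after iterating over $x/2^j$, the residual piece $J(x/2^k,h)$ with $2^k\asymp\log^2\!x$ is absorbed by the Riemann Hypothesis bound \eqref{SV}. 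Without such an averaging device this implication does not go through. Your treatment of $[0,h]$ via \eqref{Cramer}, and of the final step via \eqref{Wbound}, \eqref{E(N)bound} and \eqref{PNTbound}, is fine and agrees with the paper.
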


Montgomery's function $F(x,T)$ is used for applications to both zeros and primes. For applications to zeros, it is natural to first fix a large $T$ and consider zeros up to height $T$ and then pick $x$ to be a function of $T$ that varies in some range depending on $T$. This is how Montgomery's conjecture is stated in \eqref{Fconj} and also how the conjectures \eqref{A-Fbound} and \eqref{B-Fbound} in \Cref{thm5} are stated. However, in applications to primes, following Heath-Brown \cite{Heath-Brown}, it is more convenient to fix a large $x$ and consider the primes up to $x$, and then pick $T$ as a function of $x$ that varies in some range depending on $x$. This is what we have done in \Cref{thm8}. 

The ranges we have used in our conjectures on $F(x,T)$, $\mathcal{J}(x,\delta)$ and $J(x,T)$ in \Cref{thm8} are where these conjectures are needed. In proving \Cref{thm8}, however, it is convenient to extend these ranges to include where the bounds are known to be true on the Riemann Hypothesis. 
\begin{lem} \label{lem1} Assume the Riemann Hypothesis. Letting $x\ge 2$, then the assertion \eqref{Fthm8} implies, for any bounded $A\ge 1$,
\begin{equation} \label{Flem1} F(x,T) \ll T\mathcal{L}(x) \qquad \text{ uniformly for } \quad 2 \le T \le x^A ; \end{equation}
the assertion \eqref{calJthm8} implies 
\begin{equation} \label{calJlem1} \mathcal{J}(x,\delta) \ll \delta x^2\mathcal{L}(x) \qquad \text{ uniformly for } \quad 0 \le \delta \le 1; \end{equation}
and the assertion \eqref{Jthm8} implies 
\begin{equation} \label{Jlem1} J(x,h) \ll hx\mathcal{L}(x) \qquad \text{ uniformly for } \quad 0 \le h \le x, \end{equation} 
\end{lem}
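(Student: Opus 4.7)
The plan is to prove each of the three implications by the same template: split the target range for $T$, $\delta$, or $h$ into the hypothesized middle range plus two outer pieces, where known bounds (the trivial estimate, Montgomery's formula, or the Saffari--Vaughan bounds) already yield the desired conclusion.

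For the $F$-bound, I would decompose $[2, x^A]$ as $[2, e^{\sqrt{\mathcal{L}(x)}}]$, $[e^{\sqrt{\mathcal{L}(x)}}, x]$, and $[x, x^A]$. On the middle piece the hypothesis \eqref{Fthm8} applies directly. On the left piece the unconditional bound \eqref{Ftrivial} gives $F(x,T) \ll T\log^2 T$, and since $\log T \le \sqrt{\mathcal{L}(x)}$ we get $\log^2 T \le \mathcal{L}(x)$. On the right piece Montgomery's formula \eqref{MonThm}, valid on RH uniformly for $1 \le x \le T$, gives $F(x,T) \ll T(x^{-2}\log^2 T + \log x)$; since $\log T \le A \log x$ and $x^{-2}\log^2 x$ is bounded for $x \ge 2$, both terms are $\ll \log x \le \mathcal{L}(x)$.

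For the $\mathcal{J}$-bound I would split $[0, 1]$ as $[0, 1/x]$, $[1/x, 2e^{-\sqrt{\mathcal{L}(x)}}]$, and $[2e^{-\sqrt{\mathcal{L}(x)}}, 1]$. The middle range is exactly \eqref{calJthm8}. On the right piece the Saffari--Vaughan bound \eqref{SV2} gives $\mathcal{J}(x,\delta) \ll \delta x^2 \log^2(2/\delta)$, and the lower bound on $\delta$ forces $\log(2/\delta) \le \sqrt{\mathcal{L}(x)}$, hence $\log^2(2/\delta) \le \mathcal{L}(x)$. On the left piece $0 \le \delta \le 1/x$, the interval $(t, (1+\delta)t]$ has length $\delta t \le 1$ and so contains at most one integer; applying $(a-b)^2 \le 2a^2 + 2b^2$ with $a = \psi((1+\delta)t)-\psi(t)$ and $b = \delta t$ reduces the task to bounding $\delta \sum_{n \le 2x} n\Lambda(n)^2 + \delta^2 x^3$, which is $\ll \delta x^2 \log x \ll \delta x^2 \mathcal{L}(x)$. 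The $J$-bound follows from the same template, using \eqref{SV} on $[2xe^{-\sqrt{\mathcal{L}(x)}}, x]$ (where $\log(2x/h) \le \sqrt{\mathcal{L}(x)}$) and the estimate $\sum_{n \le y}\Lambda(n)^2 \ll y \log y$ on $[0, 1]$.

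There is no genuine obstacle here: the key observation is simply that the boundary values $e^{\sqrt{\mathcal{L}(x)}}$ and $2e^{-\sqrt{\mathcal{L}(x)}}$ are exactly where $\log^2$ of the relevant quantity crosses $\mathcal{L}(x)$, so the hypothesis range mates perfectly with the ranges covered by the unconditional or RH-based bounds. The only step requiring a moment of care is the Montgomery step, where one must verify that $x^{-2}\log^2 T$ does not swamp $\log x$ throughout $x \le T \le x^A$; this is immediate from the boundedness of $x^{-2}\log^2 x$ on $x \ge 2$ combined with $\log T \le A \log x$.
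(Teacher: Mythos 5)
Your proposal is correct and matches the paper's proof essentially verbatim: the same three-way range splits, with the trivial bound \eqref{Ftrivial} and Montgomery's formula \eqref{MonThm} covering the outer $T$-ranges, the Saffari--Vaughan bounds \eqref{SV2} and \eqref{SV} covering large $\delta$ and $h$, and the at-most-one-integer computation covering $\delta \le 1/x$ (the paper's \eqref{calJtrivial}). No differences worth noting.
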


\begin{proof}[Proof of \Cref{lem1}]

We first prove \eqref{Flem1}. By the trivial bound \eqref{Ftrivial}, we have unconditionally in the range $2\le T\le e^{\sqrt{\mathcal{L}(x)}}$ that
\[ F(x, T)\ll T \log^2T\ll T\mathcal{L}(x), \]
which with \eqref{Fthm8} proves \eqref{Flem1} for the range $2\le T\le x$. For the range $x\le T \le x^A$ we have $T^{1/A}\le x \le T$, and therefore on the Riemann Hypothesis \eqref{MonThm} implies $F(x,T) \sim T\log x \ll T\mathcal{L}(x) $ by \eqref{calL-bound}.

Next, for \eqref{calJlem1} in the range $ 2e^{-\sqrt{\mathcal{L}(x)}}\le \delta \le 1$ we have 
$\log^2(2/\delta) \le \mathcal{L}(x) $ and therefore on the Riemann Hypothesis by \eqref{SV2} we have $\mathcal{J}(x,\delta) \ll \delta x^2 \log^2(2/\delta)\ll \delta x^2 \mathcal{L}(x) $ in this range. For the range $0\le \delta < 1/x$ we have $0\le \delta x < 1$ and therefore there is at most one integer in the interval $(t, t+\delta t]$ for $0\le t \le x$. Hence
\begin{equation} \begin{split}\label{calJtrivial} \mathcal{J}(x, \delta) & \ll \int_0^{x} \left( \psi((1+\delta)t) - \psi(t) \right)^2\, dt + \int_0^{x} (\delta t )^2\, dt \\&
\ll \sum_{n\le x+1}\Lambda(n)^2\int_{n/(1+\delta)}^n \, dt +\delta^2x^3 \\&
\ll \delta \sum_{n\le x+1}\Lambda(n)^2 n +\delta^2x^3 \\&
\ll \delta x^2\log x + \delta^2x^3
\\& \ll \delta x^2( \log x + \delta x) \ll \delta x^2\log x \ll \delta x^2\mathcal{L}(x) .
\end{split} \end{equation}
By this and \eqref{calJthm8} we obtain \eqref{calJlem1}. A nearly identical proof for $J(x,h)$ shows that \eqref{Jthm8} implies \eqref{Jlem1}. 
\end{proof}

\begin{proof}[Proof of Theorem \ref{thm5} from Theorem \ref{thm8}] To prove (A), choose $\mathcal{L}(x) = \epsilon \log^2\!x$. The assumption \eqref{Fthm8} becomes $F(x,T) \ll \epsilon T\log^2\!x$ for $x^{\sqrt{\epsilon}} \le T \le x$, or equivalently $T \le x \le T^{1/\sqrt{\epsilon}}$. Letting $A=1/\sqrt{\epsilon}$, we have $F(x,T) \ll A^{-2} T\log^2\!x$ for $T \le x \le T^A$. The assertion \eqref{A-Fbound} implies that this bound for $F(x,T)$ holds since 
\[ F(x,T) = o(T\log^2T) \ll o(T\log^2\!x) \ll A^{-2} T\log^2\!x\]
if $A\to \infty$ sufficiently slowly. Thus \eqref{Jthm8} holds which implies by \Cref{lem1}, $J(x,h) \ll \epsilon h x \log^2\!x$ for $1\le h\le x$. Also by \Cref{thm8} $\mathcal{E}(N)\ll \epsilon N \log^3\!N$ and $\psi(N) = N +O(\sqrt{\epsilon}N^{1/2}\log^2\!N)$, where $\epsilon$ can be taken as small as we wish.

To prove \Cref{thm5} (B), choose $\mathcal{L}(x) = \log x$. The assumption \eqref{Fthm8} becomes $F(x,T) \ll T\log x$ for $e^{\sqrt{\log x}} \le T \le x$, or equivalently $T \le x \le T^{\log T}$ and this is satisfied when \eqref{B-Fbound} holds. Thus by \eqref{Jthm8} and \Cref{lem1} $J(x,h) \ll h x \log x$ for $1\le h\le x$, and by \Cref{thm8} $\mathcal{E}(N)\ll N \log^2\!N$ and $\psi(N) = N +O(N^{1/2}\log^{3/2} N)$.
\end{proof}

\begin{proof}[Proof of \Cref{thm8}, \eqref{Fthm8} implies \eqref{calJthm8}]
We start from the easily verified identity
\[ F(x,T) = \frac2{\pi}\int_{-\infty}^\infty \left|\sum_{0<\gamma\le T}\frac{x^{i\gamma}}{1+(t-\gamma)^2}\right|^2\, dt, \]
which is implicitly in \cite{Montgomery72}, see also \cite[Eq. 26]{GoldMont} and \cite[Section 4]{Gold2005}. Next, Montgomery showed, using \eqref{N(T+1)-N(T)},
\begin{equation} \label{F-integral} F(x,T) = \frac2{\pi}\int_0^T \left|\sum_\gamma\frac{x^{i\gamma}}{1+(t-\gamma)^2}\right|^2\, dt +O(\log^3 T) . \end{equation}

The main tool in proving this part of \Cref{thm8} is the following result.
\begin{lem} \label{lem2} 
For $0<\delta\le 1$ and $e^{2\kappa} = 1+\delta$, let
\begin{equation} \label{G} G(x,\delta):= \int_0^\infty \left(\frac{\sin \kappa t}{t}\right)^2 \left|\sum_\gamma\frac{x^{i\gamma}}{1+(t-\gamma)^2}\right|^2\, dt. \end{equation}
Assuming the Riemann Hypothesis, we have, for $1/x \le \delta \le 1$,
\begin{equation} \label{calJlem2} \mathcal{J}(x,\delta) \ll x^2 G(x,\delta) + O(\delta x^2).
\end{equation}
\end{lem}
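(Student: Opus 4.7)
My plan is to use the explicit formula for $\psi$ under the Riemann Hypothesis to reduce $\mathcal{J}(x,\delta)$ to a bilinear form in zeros that structurally matches $G(x,\delta)$. The explicit formula gives
\[
\Delta(y) := \psi((1+\delta)y) - \psi(y) - \delta y = -\sum_\rho \frac{y^\rho\bigl((1+\delta)^\rho - 1\bigr)}{\rho} + O(1),
\]
where the sum is taken in the symmetric sense. Writing $\rho = 1/2+i\gamma$ and $b(\gamma) := \bigl((1+\delta)^\rho-1\bigr)/\rho$, this becomes $\Delta(y) = -y^{1/2}\sum_\gamma b(\gamma)\, y^{i\gamma}+O(1)$. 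Using $e^{2\kappa}=1+\delta$, a direct expansion yields
\[
|b(\gamma)|^2 = \frac{4e^\kappa\sin^2(\kappa\gamma) + 2e^\kappa(\cosh\kappa-1)}{1/4+\gamma^2},
\]
which exhibits precisely the $\sin^2(\kappa\gamma)$ weight driving $G(x,\delta)$; this matching of diagonal coefficients is the guiding principle.

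Squaring and integrating over $[0,x]$, the $O(1)$ contribution gives $O(x) = O(\delta x^2)$ since $\delta \ge 1/x$, and the cross term is absorbed by Cauchy--Schwarz. The task then reduces to showing
\[
\int_0^x y\,\Bigl|\sum_\gamma b(\gamma)\, y^{i\gamma}\Bigr|^2 dy \ll x^2 G(x,\delta) + O(\delta x^2).
\]
To handle the (only conditionally convergent) zero sum I first truncate to $|\gamma|\le T$, choosing $T=T(x,\delta)$ so that the tail---bounded using $|b(\gamma)|\ll\min(\delta,1/|\gamma|)$ together with \eqref{N(T+1)-N(T)}---contributes at most $O(\delta x^2)$. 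Expanding the square and carrying out the $y$-integration produces a double sum over pairs $(\gamma,\gamma')$ with weight $b(\gamma)\overline{b(\gamma')}\, x^{2+i(\gamma-\gamma')}/(2+i(\gamma-\gamma'))$, which after pulling out the factor $x^2$ must be dominated by the analogous expansion of $G(x,\delta)$.

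The main obstacle is this kernel-matching step: the direct integration produces the rational weight $1/(2+i(\gamma-\gamma'))$, whereas $G(x,\delta)$ carries the weight $\int_0^\infty(\sin\kappa t/t)^2/[(1+(t-\gamma)^2)(1+(t-\gamma')^2)]\,dt$. My plan is to use the integral representation $b(\gamma)=\int_0^{2\kappa}e^{\rho s}\,ds$ to rewrite the left-hand side as a double integral over $s,s'\in[0,2\kappa]$ of sums $\sum_\rho(ye^s)^\rho\overline{\sum_\rho(ye^{s'})^\rho}$, and simultaneously to expand each Poisson kernel in $G(x,\delta)$ via the Fourier pair $1/(1+(t-\gamma)^2) = \tfrac12\int_{-\infty}^\infty e^{-|v|}e^{iv(t-\gamma)}\,dv$. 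Because the Fourier transform of $(\sin\kappa t/t)^2$ is a triangle supported in $[-2\kappa,2\kappa]$ that matches the window of the first representation, the two sides admit a common Parseval-type form; the nonnegativity of $F(\cdot,T)$, and hence of $G$, then permits the required term-by-term comparison. Controlling the interaction between the conditional convergence of the zero sum and the oscillatory factors $x^{i(\gamma-\gamma')}$ is the principal technical hurdle, to be managed by combining the $|\gamma|\le T$ truncation with a smooth truncation in the $y$-variable near $y=x$.
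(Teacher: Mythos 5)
Your strategy is essentially the paper's: write $\mathcal{J}(x,\delta)$ via the explicit formula as $\int_0^x t\,|\sum_\gamma a(\rho)t^{i\gamma}|^2\,dt$ plus errors (your $b(\gamma)$ is exactly the paper's $a(\rho)=((1+\delta)^\rho-1)/\rho$, and your formula for $|b(\gamma)|^2$ is correct), then compare with $G(x,\delta)$ by Fourier duality, using that $|a(it)|^2=4(\sin\kappa t/t)^2$. However, as written the proposal has two genuine gaps. First, the truncation step: you propose to control the tail $\sum_{|\gamma|>T}b(\gamma)y^{i\gamma}$ using $|b(\gamma)|\ll\min(\delta,1/|\gamma|)$ together with \eqref{N(T+1)-N(T)}, but $\sum_{|\gamma|>T}|\gamma|^{-1}$ diverges, so no pointwise bound on the tail is available this way. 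The paper instead invokes the truncated explicit formula with its explicit error term $E(t,Z)$ from \eqref{expliciterror}, chooses $Z=x\log^3\!x$, and bounds $\int_2^x|E(t,Z)|\,dt$ and $\int_2^x|E(t,Z)|^2\,dt$ (the $\lVert t\rVert$ and $\lVert(1+\delta)t\rVert$ terms need their own estimate); this is fixable but is real missing work, and it is precisely here that the hypothesis $\delta\ge 1/x$ is consumed to absorb the error into $O(\delta x^2)$.

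Second, and more seriously, the kernel-matching step is the entire content of the lemma and you leave it as a plan whose stated justification does not work. A ``term-by-term comparison permitted by the nonnegativity of $F$'' is not a valid argument: the off-diagonal terms $b(\gamma)\overline{b(\gamma')}x^{i(\gamma-\gamma')}$ are complex and oscillatory, and nonnegativity of a bilinear form does not allow you to compare it termwise with another bilinear form carrying a different kernel. What actually closes the gap in the paper is a two-step positivity argument: (i) the weight $(\sin\kappa t/t)^2=\frac14|a(it)|^2$ sitting outside the zero sum in $G$ is replaced by the weights $|a(1/2+i\gamma)|^2$ attached to the individual zeros, at the cost of admissible errors --- this is \cite[Lemma 10]{GoldMont}, for which your Fej\'er-triangle observation is the correct heuristic but not a substitute; (ii) Plancherel applied to $g(t)=\sum_{|\gamma|\le Z}a(\rho)x^{i\gamma}/(1+(t-\gamma)^2)$ converts the $t$-integral into $\pi^2\int_{-\infty}^{\infty}|\sum a(\rho)x^{i\gamma}e(-\gamma u)|^2e^{-4\pi|u|}\,du$, and \emph{restricting this manifestly nonnegative integrand to $u\ge0$} and substituting $t=xe^{-2\pi u}$ yields exactly $x^{-2}\int_0^x|\sum a(\rho)t^\rho|^2\,dt$ as a lower bound for $G(x,\delta)+O(\delta)$. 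That one-sided restriction is the positivity mechanism you are reaching for; your double sum with the weight $x^{2+i(\gamma-\gamma')}/(2+i(\gamma-\gamma'))$ can be reconciled with it (note $\mathrm{Re}\,\frac{1}{2+i(\gamma-\gamma')}=\frac12 w(\gamma-\gamma')$, the Fourier dual of the Poisson-kernel convolution), but until steps (i) and (ii) are actually carried out the proof is not complete.
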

\noindent We will prove \Cref{lem2} after completing the proof of \Cref{thm8}. 

Since $\kappa = \frac12\log(1+\delta) \asymp \delta$ for $0<\delta \le 1$,
we have 
\[ 0\le \left(\frac{\sin \kappa t}{t}\right)^2 \ll \min(\kappa^2 , 1/t^2)\ll \min( \delta^2, 1/t^2),\]
and hence
\[ G(x,\delta) 
\ll \int_0^\infty \min\left( \delta^2, \frac{1}{t^2}\right) \left|\sum_\gamma\frac{x^{i\gamma}}{1+(t-\gamma)^2}\right|^2\, dt .
\]
For $U\ge 1/\delta$ we have, since by \eqref{N(T+1)-N(T)} $\sum_\gamma 1/(1+(t-\gamma)^2)\ll \log (|t|+2)$,
\[ \int_U^\infty \frac{1}{t^2} \left|\sum_\gamma\frac{x^{i\gamma}}{1+(t-\gamma)^2}\right|^2\, dt \ll \int_U^\infty \frac{\log^2 t}{t^2} \, dt \ll \frac{\log^2U}{U}. \]
On taking $U=2\log^2(2/\delta)/\delta$, we have by \eqref{F-integral}
\[ \begin{split} G(x,\delta) &\ll \int_0^U \min\left( \delta^2, \frac{1}{t^2}\right) \left|\sum_\gamma\frac{x^{i\gamma}}{1+(t-\gamma)^2}\right|^2\, dt +O(\delta)\\&
\ll \delta^2\int_0^{2/\delta}\left|\sum_\gamma\frac{x^{i\gamma}}{1+(t-\gamma)^2}\right|^2\, dt \\&
\qquad+ \sum_{ k\ll \log\log (4/\delta)}\frac{\delta^2}{2^{2k}}\int_{2^k/\delta}^{2^{k+1}/\delta}\left|\sum_\gamma\frac{x^{i\gamma}}{1+(t-\gamma)^2}\right|^2\, dt +O(\delta) \\&
\ll \sum_{ k\ll\log\log(4/\delta)}\frac{\delta^2}{2^{2k}}\left( F(x, 2^k/\delta)+\log^3(2^k/\delta)\right) +O(\delta)\\& \ll \delta^2 \sum_{ k\ll \log\log(4/\delta) }\frac{1}{2^{2k}} F(x, 2^k/\delta)\ + O(\delta).
\end{split}
\]
We now assume \eqref{Fthm8} holds and therefore by \Cref{lem1}, \eqref{Flem1} also holds. Taking
$ 1/x \le \delta \le 1$, we see that for $1\le k \ll \log\log(4/\delta)$ we have $2\le 2^k/\delta\ll x\log^Cx$, for a constant $C$. Therefore $F(x,2^k/\delta)$ is in the range where \eqref{Flem1} applies, and therefore
\[ \begin{split} G(x,\delta) & \ll \delta^2 \sum_{ k\ll \log\log(4/\delta) }\frac{1}{2^{2k}} (2^k/\delta)\mathcal{L}(x) \ +O(\delta) \\ &
\ll \delta \mathcal{L}(x),
\end{split}
\]
which by \eqref{calJlem2} proves \eqref{calJthm8} over a wider range of $\delta$ than required.
\end{proof}

\begin{proof}[Proof of \Cref{thm8}, \eqref{calJthm8} implies \eqref{Jthm8}, and the remaining results]

To complete the proof of \Cref{thm8} we need the following lemma of Saffari-Vaughan \cite[Eq. (6.21)]{SaffariVaughan}, see also \cite[pp. 126--127]{GV1996}, which we will prove later. 
\begin{lem}[Saffari-Vaughan] \label{lem3} For any $1\le h \le x/4$, and any integrable function $f(x)$, we have
\begin{equation} \int_{x/2}^x (f(t+h)-f(t))^2\, dt \le \frac{2x}{h} \int_0^{8h/x}\left(\int_0^x(f(y+\delta y)- f(y) )^2 \, dy\right) \, d\delta. \end{equation}
\end{lem}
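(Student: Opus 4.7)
The strategy is to express the additive increment $f(t+h)-f(t)$ as a difference of two multiplicative increments $f(y(1+\delta))-f(y)$ sharing a common base $y$, and then integrate over an auxiliary parameter $\delta$ to promote this pointwise algebraic identity into the claimed double-integral inequality.

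\emph{Pointwise decomposition.} Fix $t \in [x/2,x]$ and $\delta$ in the nondegenerate interval $[h/t, 2h/t] \subset [h/x, 4h/x]$. Set
\[ y := \frac{t+h}{1+\delta}, \qquad \delta' := \frac{\delta t - h}{t+h}. \]
A direct computation gives $y(1+\delta) = t+h$ and $y(1+\delta') = t$, with $\delta' \in [0, h/(t+h)] \subset [0, 2h/x]$. Consequently
\[ f(t+h)-f(t) = \bigl[f(y(1+\delta))-f(y)\bigr] - \bigl[f(y(1+\delta'))-f(y)\bigr], \]
and $(a-b)^2 \le 2a^2 + 2b^2$ yields a pointwise bound on $(f(t+h)-f(t))^2$.

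\emph{Averaging and change of variables.} Integrate this bound over $\delta \in [h/t, 2h/t]$, an interval of length $h/t$; the left side becomes $(h/t)(f(t+h)-f(t))^2$. Multiplying through by $t/h \le 2x/h$ and then integrating over $t \in [x/2,x]$ reduces matters to bounding the two double integrals
\[ \int_{x/2}^x\!\int_{h/t}^{2h/t} [f(y(1+\delta))-f(y)]^2 \, d\delta \, dt, \qquad \int_{x/2}^x\!\int_{h/t}^{2h/t} [f(y(1+\delta'))-f(y)]^2 \, d\delta \, dt. \]
In the first, for each fixed $\delta$ the map $t \mapsto y$ satisfies $dt = (1+\delta)\,dy$, and the image in $(y,\delta)$-coordinates lies inside $[0,x] \times [0, 4h/x]$. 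In the second, the key identity $(1+\delta) = (1+h/t)(1+\delta')$ shows $y = t/(1+\delta')$, so one first changes $\delta \mapsto \delta'$ at fixed $t$ (Jacobian $1+h/t$) and then $t \mapsto z := t/(1+\delta') = y$ (Jacobian $1+\delta'$); both Jacobians are bounded by $2$ thanks to $h \le x/4$, and the image sits inside $[0,x] \times [0, 2h/x]$. Since both images are contained in $[0,x] \times [0, 8h/x]$, one obtains the claimed inequality up to a universal constant.

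\emph{Main obstacle.} The decomposition is purely algebraic and requires no regularity of $f$; the hypothesis $h \le x/4$ enters only to keep the multiplicative perturbations $1+\delta$ and $1+\delta'$ uniformly bounded. The principal technical burden lies in the bookkeeping needed to convert the ``up to a universal constant'' above into the exact factor in $2x/h$ claimed by the lemma — a sharper choice of the averaging interval, or a slightly more efficient double decomposition, is needed to track the $L^2$ constants tightly. Once the exact constant is in hand, the statement follows immediately upon dropping the outer factor of $2$ and noting that the image of the $(t,\delta)$ domain lands entirely in $[0,x] \times [0, 8h/x]$.
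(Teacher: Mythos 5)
Your decomposition is, at heart, the same one the paper uses: write $f(t+h)-f(t)$ as a difference of two increments $f(y(1+\delta))-f(y)$ and $f(y(1+\delta'))-f(y)$ measured from a common base point $y$, average over an auxiliary parameter of length $\asymp h/t$, and change variables into the $(y,\delta)$ region $[0,x]\times[0,8h/x]$. Your algebra checks out: with $y=(t+h)/(1+\delta)$ and $\delta'=(\delta t-h)/(t+h)$ one indeed has $y(1+\delta)=t+h$, $y(1+\delta')=t$, and the stated ranges and Jacobian bounds are correct. So the argument as written does prove the inequality with some absolute constant in place of $2$ (tracking your factors gives roughly $12x/h$), and you correctly identify the loss of the exact constant as the remaining issue. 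For every application in this paper only the order of magnitude matters, so this shortfall is harmless in practice; but the lemma as stated does assert the constant $2$, and your proof does not reach it.

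The way the paper's proof avoids the constant loss is worth noting: it parametrizes additively rather than multiplicatively. Substituting $t=y+u$ and averaging over $u\in[0,h]$, the splitting $f(y+u+h)-f(y+u)=\bigl(f(y+u+h)-f(y)\bigr)-\bigl(f(y+u)-f(y)\bigr)$ produces two terms whose shift parameters, $u+h\in[h,2h]$ and $u\in[0,h]$, \emph{tile} the interval $[0,2h]$ exactly, so after enlarging the $y$-range monotonically to $[x/4,x]$ the two double integrals recombine into a single integral $\int_0^{2h}\int_{x/4}^{x}(f(y+u)-f(y))^2\,dy\,du$ with no additional factor of $2$. The only inequality used beyond $(a+b)^2\le 2(a^2+b^2)$ is then the final substitution $u=\delta y$, where the Jacobian $y\,d\delta\le x\,d\delta$ contributes the single factor of $x$ and the condition $y\ge x/4$ gives $\delta\le 8h/x$. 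In your version the two increments live over \emph{different} $\delta$-ranges that do not tile a common interval, and each change of variables carries a Jacobian bounded only by $2$; that is exactly where the extra constants accumulate. If you want the stated constant, rework your averaging in the additive variable $u=\delta y$ first and convert to $\delta$ only at the last step.
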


Taking $f(t) = \psi(t) - t$ in \Cref{lem3}, and assuming $1\le h \le x/8$ so that we may apply \eqref{calJlem1}, we have
\[ \begin{split} J(x,h) - J(x/2, h) & \ll \frac{x}{h} \int_0^{8h/x} \mathcal{J}(x,\delta) \, d\delta \\
& \ll \frac{x^3\mathcal{L}(x)}{h} \int_0^{8h/x} \delta \, d\delta \\&
\ll h x \mathcal{L}(x). \end{split} \]
Replacing $x$ by $x/2, x/4, \ldots, x/2^{k-1}$ and adding we obtain
\[\begin{split} J(x,h) - J(x/2^k, h) &= \sum_{j\le k}\left(J(x/2^{j-1},h) - J(x/2^j, h)\right) \\& \ll h \sum_{j\le k} (x/2^{j-1}) \mathcal{L}(x/2^{j-1}) \\ & \ll h x \mathcal{L}(x), 
\end{split}\]
where we used $\mathcal{L}(x/2^{j-1})\le \mathcal{L}(x)$, and note that here we need $h\le\frac{x}{2^{k+2}}$.
Taking $k$ so that $\log^2\!x \le 2^{k} \le 2\log^2\!x$, then by \eqref{SV} we have 
\[ J(x/2^{k},h) \le J(x/\log^2\!x,h) \ll hx. \]
Thus
\[ J(x,h) \ll hx\mathcal{L}(x), \]
for $1\le h\le \frac{x}{8\log^2\!x}\le
\frac{x}{2^{k+2}}$.
This proves \eqref{Jthm8} over a larger range of $h$ than required. 

Now applying to \eqref{Wbound} the estimates \eqref{Cramer} and \eqref{Jlem1} (which is implied by \eqref{Jthm8}), we obtain
\[
\mathcal{W}(N,h) \ll \frac{N\mathcal{L}(N)}{h}, \qquad 1\le h\le N,
\]
which by \eqref{E(N)bound} gives the bound $\mathcal{E}(N)\ll N\mathcal{L}(N) \log N$ and this with \eqref{PNTbound} give
\[ \psi(N) = N +O(N^{1/2}\sqrt{\mathcal{L(N)}}\log N). \]

\end{proof}

\begin{proof}[Proof of \Cref{lem2}]

We define $e^{2\kappa} = 1+\delta$, $0<\delta \le 1$, and define
\begin{equation} \label{a(s)} a(s) := \frac{(1+\delta)^s-1}{s}. \end{equation}
Thus 
\[ |a(it)|^2 = 4\left(\frac{\sin \kappa t}{t}\right)^2,\]
and
\[ \begin{split} G(x,\delta) &= \frac14 \int_0 ^\infty |a(it)|^2 \left|\sum_\gamma\frac{x^{i\gamma}}{1+(t-\gamma)^2}\right|^2\, dt \\ &
= \frac18 \int_{-\infty} ^\infty |a(it)|^2 \left|\sum_\gamma\frac{x^{i\gamma}}{1+(t-\gamma)^2}\right|^2\, dt, \end{split} \]
on noting the integrand is an even function since in the sum for every $\gamma$ there is a $-\gamma$.
The next step is to bring $|a(it)|^2$ into the sum over zeros using \cite[Lemma 10]{GoldMont}, from which we immediately obtain, for $Z\ge 1/\delta$,
\[ G(x, \delta) = \frac18\int_{-\infty}^\infty \left|\sum_{|\gamma|\le Z}\frac{a(1/2+i\gamma)x^{i\gamma}}{1+(t-\gamma)^2}\right|^2\, dt +O\left(\delta^2\log^3(2/\delta)\right) +O\left(\frac{\log^3\!Z}{Z}\right).\]
We comment that the proof of Lemma 10 in \cite{GoldMont} is an elementary argument making use of \eqref{N(T+1)-N(T)}. 
We have not made use of the Riemann Hypothesis yet but henceforth we will assume and use $\rho= 1/2+i\gamma$ and $a(\rho) = a(1/2+i\gamma)$. In order to keep our error terms small, we now choose
\begin{equation} \label{Zdelta} Z = x\log^3\!x, \qquad \text{and} \quad 1/x \le \delta \le 1. \end{equation}
Thus 
\begin{equation} G(x, \delta) + O(\delta) = \frac18\int_{-\infty}^\infty \left|\sum_{|\gamma|\le Z}\frac{a(1/2+i\gamma)x^{i\gamma}}{1+(t-\gamma)^2}\right|^2\, dt .\end{equation}
Defining the Fourier transform by
\[ \widehat{f}(u) := \int_{-\infty}^\infty f(t) e(-tu)\, dt. \]
Plancherel's theorem says that if $f(t)$ is in $L^1 \cap L^2$ then $\widehat{f}(u)$ is in $L^2$ and we have
\[ \int_{-\infty}^\infty |f(t)|^2 \, dt = \int_{-\infty}^\infty |\widehat{f}(u)|^2 \, du.\]
An easy calculation gives the Fourier transform pair
\[ g(t) = \sum_{|\gamma|\le Z}\frac{a(\rho)x^{i\gamma}}{1+(t-\gamma)^2}, \qquad \widehat{g}(u) = \pi \sum_{|\gamma|\le Z}a(\rho)x^{i\gamma} e(-\gamma u)e^{-2\pi |u|} , \]
and therefore by Plancherel's theorem we have, with $y= 2\pi u$ in the third line below
\begin{equation}\label{G-Plancherel} \begin{split} G(x,\delta) +O(\delta) &= \frac{\pi^2}{8}\int_{-\infty}^\infty \bigg|\sum_{|\gamma|\le Z} a(\rho)x^{i\gamma}e(-\gamma u)\bigg|^2e^{-4\pi |u|}\, du \\& 
= \frac{\pi^2}{8} \int_{-\infty}^\infty \bigg|\sum_{|\gamma|\le Z} a(\rho)(xe^{-2\pi u})^{i\gamma}\bigg|^2e^{-4\pi |u|}\, du \\& 
=
\frac{\pi}{16} \int_{-\infty}^\infty \bigg|\sum_{|\gamma|\le Z} a(\rho)(xe^{-y})^{i\gamma}\bigg|^2e^{-2 |y|}\, dy \\& \ge \frac{\pi}{16} \int_0^\infty \bigg|\sum_{|\gamma|\le Z} a(\rho)(xe^{-y})^{i\gamma}\bigg|^2e^{-2 y}\, dy . \end{split}
\end{equation}
On letting $t=xe^{-y}$ in the last integral we obtain
\[ \int_0^\infty \bigg|\sum_{|\gamma|\le Z} a(\rho)(xe^{-y})^{i\gamma}\bigg|^2e^{-2 y}\, dy = \frac{1}{x^2} \int_0^{x} \bigg|\sum_{|\gamma|\le Z} a(\rho)t^{\rho}\bigg|^2\, dt ,\]
 and we conclude that 
 \begin{equation} \int_0^{x} \bigg|\sum_{|\gamma|\le Z} a(\rho)t^{\rho}\bigg|^2\, dt \ll x^2 G(x,\delta) + O(\delta x^2). \end{equation}
We now complete the proof of \Cref{lem2} by proving that we have
\begin{equation}\label{Jzeroformula} \int_0^{x} \bigg|\sum_{|\gamma|\le Z} a(\rho)t^{\rho}\bigg|^2\, dt =\mathcal{J}(x,\delta)+O(\delta x^2). \end{equation}

By the standard truncated explicit formula \cite[Chapter 12, Theorem 12.5]{MontgomeryVaughan2007} (also see \cite[Eq. 34]{GoldMont}), we have for $2\le t\le x$, and $Z\ge x$
\begin{equation}\label{explicit} - \sum_{|\gamma|\le Z} a(\rho) t^\rho = \psi((1+\delta)t) - \psi(t) - \delta t + E(t,Z), \end{equation} 
where 
\begin{equation} \label{expliciterror}E(t,Z) \ll \frac{t\log^2(tZ)}{Z} + \log t \min(1,\frac{t}{Z\lVert t\rVert}) + \log t \min(1,\frac{t}{Z\lVert (1+\delta)t\rVert}) ,\end{equation}
and $\lVert u\rVert$ is the distance from $u$ to the nearest integer.
Using the trivial estimate 
\[\psi((1+\delta)t) - \psi(t)\ll \delta t \log t ,\] 
we have
\[ \left|\sum_{|\gamma|\le Z} a(\rho) t^\rho\right|^2 = ( \psi((1+\delta)t) - \psi(t) - \delta t)^2 + O( \delta t (\log t) |E(t,Z)| ) + O(|E(t,Z)|^2).\]
There is a small complication at this point since we want to integrate both sides of this equation over $0\le t\le x$, but \eqref{explicit} requires $2\le t\le x$. Therefore 
integrating from $2\le t \le x$, we obtain
\begin{equation} \label{calJmain} \int_0^{x} \bigg|\sum_{|\gamma|\le Z} a(\rho)t^{\rho}\bigg|^2\, dt = \mathcal{J}(x,\delta) - \mathcal{J}(2,\delta) +\int_0^2\bigg|\sum_{|\gamma|\le Z} a(\rho)t^{\rho}\bigg|^2\, dt + O(E^*), \end{equation}
where 
\begin{equation} \label{Estar} E^*= \delta x \log x\int_2^x|E(t,Z)|\, dt + \int_2^x|E(t,Z)|^2\, dt. \end{equation}
We note first that by \eqref{calJtrivial} 
\[\mathcal{J}(2,\delta) \ll \delta.\]
Next, for $|\text{Re}(s)|\ll 1$ we have $|a(s)|\ll \min(1,1/|s|)$, and by \eqref{N(T+1)-N(T)} we obtain $$ \sum_{|\gamma|\le Z} |a(\rho)|\ll \log^2\!Z. $$
Thus
\[ \int_0^{2} \bigg|\sum_{|\gamma|\le Z} a(\rho)t^{\rho}\bigg|^2\, dt \ll \log^4\!Z \ll \log^4\!x .\]
Both of these errors are negligible compared to the error term $\delta x^2 \gg x$.
It remains to estimate $E^*$. 
First, for $j=1$ or $2$, 
\[ \begin{split} \int_2^x \min(1, \frac{t}{Z\lVert t\rVert})^j \, dt & \ll \sum_{n\le 2x } \int_{n}^{n+1/2} \min(1, \frac{n}{Z(t-n)})^j \, dt \\&
\ll \sum_{n\le 2x } \left(\frac{n}{Z} + (\frac{n}{Z})^j\int_{n+n/Z}^{n+1/2} \frac{1}{(t-n)^j} \, dt\right) \\& 
\ll \frac{x^2\log^{2-j}\!Z}{Z} \ll \frac{x}{\log^{j+1}\!x}. \end{split}\]
The same estimate holds for the term in \eqref{expliciterror} with $\lVert(1+\delta)t\rVert$ by a linear change of variable in the integral. 
Thus 
\[ \int_2^x|E(t,Z)|\, dt \ll \frac{x^2\log^2(xZ)}{Z} + \frac{x}{\log x} \ll \frac{x}{\log x}, \]
and
\[ \int_2^x|E(t,Z)|^2\, dt \ll \frac{x^3\log^4(xZ)}{Z^2} + \frac{x}{\log x} \ll \frac{x}{\log x}. \]
We conclude from \eqref{Estar} that
\[ E^* \ll \delta x^2 + \frac{x}{\log x} \ll \delta x^2, \]
since by \eqref{Zdelta}, $x \ll \delta x^2$. Substituting these estimates into \eqref{calJmain} proves \eqref{Jzeroformula}. 

\end{proof}

\begin{proof}[Proof of \Cref{lem3}] This proof comes from \cite[pp. 126--127]{GV1996}.
We take $1\le h \le x/4$. On letting $t=y+u$ with $0\le u\le h$, we have
\[ \begin{split} J :&=\int_{x/2}^x (f(t+h)-f(t))^2\, dt 
= \int_{x/2-u}^{x-u} (f(y+u+h)-f(y+u))^2\, dy \\ &
= \frac1h\int_0^h \left(\int_{x/2-u}^{x-u} (f(y+u+h)-f(y+u))^2\, dy \right) du \\&
\le \frac1h\int_0^h \left(\int_{x/2-u}^{x-u} \left( |f(y+u+h)-f(y)| +|f(y+u)-f(y)| \right)^2\, dy \right) du \\& 
\le \frac2h\int_0^h \left(\int_{x/2-u}^{x-u} (f(y+u+h)-f(y))^2 +(f(y+u)-f(y))^2 \, dy \right) du . \end{split} \]
Since the integration range of the inner integral always lies in the interval $[x/4,x]$, we have
\[ J \le \frac2h\int_0^{2h} \left(\int_{x/4}^{x} (f(y+u)-f(y))^2 \, dy \right) du = \frac2h \iint\limits_{\mathcal{R}} (f(y+u)-f(y))^2 dA,\]
where $\mathcal{R}$ is the region defined by $x/4\le y \le x$ and $0\le u\le 2h$. Making the change of variable $u=\delta y$ then $\mathcal{R}$ is defined by $x/4\le y \le x$ and $0\le \delta \le 2h/y$, and changing the order of integration gives
\[ \begin{split} J &\le \frac2h \int_{x/4}^{x} \left(\int_0^{2h/y} (f(y+\delta y)-f(y))^2 y \, d\delta \right) \, dy \\ &
\le \frac{2x}h \int_{x/4}^{x} \left(\int_0^{8h/x} (f(y+\delta y)-f(y))^2 \, d\delta \right) \, dy
\end{split} \] 
Inverting the order of integration again, we conclude
\[ J \le \frac{2x}{h} \int_0^{8h/x}\left(\int_0^x(f(y+\delta y)- f(y) )^2 \, dy\right) \, d\delta . \]
\end{proof}

\section{Proof of Theorem \ref{thm6}, Corollary \ref{cor_BHMS}, and Theorem \ref{thm7}.}

\begin{proof}[Proof of Theorem \ref{thm6}] 
From \cite[Theorem 30]{Ingham1932}, it is well-known that 
\[ \psi(x) - x \ll x^{\Theta}\log^2\!x , \]
but it seems less well-known that in 1965 Grosswald \cite{Grosswald} refined this result by proving that, for $1/2<\Theta <1$,
\begin{equation} \label{Grosswald} \psi(x) - x \ll x^{\Theta}, \end{equation}
from which we immediately obtain 
\begin{equation} \label{H-Theta} H(x) := \int_0^x ( \psi(t) - t)^2\, dt \ll x^{2\Theta+1 }. \end{equation}
From \cite[Lemma 8]{Bhowmik-H-M-SGoldbach2019}
we have from the case $q=1$ that, for $x\ge 2$ and $1\le h\le x$,
\begin{equation}\label{BHMS} \int_x^{2x} ( \psi(t+h) -\psi(t) - h)^2\, dt \ll h x^{2\Theta}\log^4\!x.\end{equation}
We first need to prove that the same bound holds for $J(x,h)$. We have
\[ \begin{split} J(x,h) &= \int_0^{h} ( \psi(t+h) -\psi(t) - h)^2\, dt + \int_h^{x} ( \psi(t+h) -\psi(t) - h)^2\, dt \\ &
:= J_1(h) +J_2(x,h). \end{split}\]
For $J_1(h)$ we use \eqref{H-Theta} to see that, for $1\le h\le x$,
\[ J_1(h) \ll \int_0^h (\psi(t+h) - (t+h))^2 \, dt + \int_0^h (\psi(t) -t)^2\, dt \ll H(2h) \ll h^{2\Theta +1} \ll h x^{2\Theta}. \]
For $J_2(x,h)$ we apply \eqref{BHMS} and find for any interval $(x/2^{k+1}, x/2^{k}]$ contained in $[h/2,x]$
\[ \int_{x/2^{k+1}}^{x/2^{k}}( \psi(t+h) -\psi(t) - h)^2\, dt \ll \frac{h x^{2\Theta} \log^4\!x}{2^{2k\Theta}},\]
and summing over $k\ge 0$ to cover the interval $[h,x]$ we obtain
$ J_2(x,h) \ll h x^{2\Theta}\log^4\!x.$ Combining these estimates we conclude, as desired,
\begin{equation}\label{J-Theta} J(x,h) \ll h x^{2\Theta}\log^4\!x , \end{equation}
and using \eqref{H-Theta} and \eqref{J-Theta} in Theorem \ref{thm2} gives $\mathcal{E}(N) \ll N^{2\Theta}\log^5\!N$.
\end{proof} 

\begin{proof}[Proof of Corollary \ref{cor_BHMS}] Using \eqref{specialcase} of Theorem \ref{thm1}, we see, since the sum over zeros is absolutely convergent, that the contribution from zeros with $\beta <1/2$ is $o(x^{3/2})$. 
\end{proof}

\begin{proof}[Proof of Theorem \ref{thm7}] We have if $n$ is odd that
\[\psi_2(n) = \sum_{m+m'=n}\Lambda(m)\Lambda(m') = 2\log 2\sum_{\substack{n=2^j +m \\ j\ge 1}}\Lambda(m) = 2\log 2 \sum_{j\leq \log_2\!n}\Lambda(n-2^j)\]
and therefore
\[\sum_{\substack{n\le N \\ n \ {\rm odd}}} \psi_2(n) = 2\log 2 \sum_{j\le \log_2\!N}\sum_{\substack{2^j< n\le N \\ n \ {\rm odd}}}\Lambda(n-2^j).\]
We may drop the condition that $n$ is odd in the last sum with an error of $O(\log^2\!N)$ since the only non-zero terms in the sum when $n$ is even are when $n$ is a power of 2. Hence
\[\sum_{\substack{n\le N \\ n \ {\rm odd}}} \psi_2(n) = 2\log 2 \sum_{j\le \log_2\!N}\left(\psi(N-2^j) +O(\log^2\!N)\right).\]
Applying the prime number theorem with a modest error term we have
\[\sum_{\substack{n\le N \\ n \ {\rm odd}}} \psi_2(n) = 2\log 2 \sum_{j\le \log_2\!N}\left((N-2^j) +O(N/\log N)\right) = 2N\log N +O(N).\]
\end{proof}
\medskip

\end{document}